\newif\ifFULL
\newif\ifRuodu
  \renewcommand{\ge}{\geqslant}
  \renewcommand{\le}{\leqslant}
  \renewcommand{\epsilon}{\varepsilon}
\renewcommand{\d}{\,\mathrm{d}}
\newcommand{\p}{\mathbb{P}}
\newcommand{\E}{\mathbb{E}}    
\newcommand{\R}{\mathbb{R}}    
  \newcommand{\id}{\mathds{1}}
\newcommand{\cD}{\mathcal{D}}
\theoremstyle{plain}
\newtheorem{theorem}{Theorem} 
\newtheorem{corollary}[theorem]{Corollary}
\newtheorem{lemma}[theorem]{Lemma}
\newtheorem{proposition}[theorem]{Proposition}
\theoremstyle{definition}
\newtheorem{example}[theorem]{Example}
\theoremstyle{remark}
\newtheorem{remark}[theorem]{Remark}
 \renewcommand{\cite}{\citet}  
\title{Multiple testing under negative dependence} 
\author{Ziyu Chi\thanks%
  {Dept. of IEOR,
University of California Berkeley,
  Berkeley CA, USA.
  \href{ziyu.chi@berkeley.edu}{ziyu.chi@berkeley.edu}.}
  \and
  Aaditya Ramdas\thanks
  {
Depts. of Statistics and Machine Learning, Carnegie Mellon University, Pittsburgh PA, USA.
  \href{aramdas@cmu.edu}{aramdas@cmu.edu}.
  }
  \and
  Ruodu Wang\thanks%
  {Dept. of Statistics and Actuarial Science,
 University of Waterloo,
  Waterloo ON, Canada.
  \href{mailto:wang@uwaterloo.ca}{wang@uwaterloo.ca}.}}
\begin{document}

\maketitle

\begin{abstract}
The multiple testing literature has primarily dealt with three types of dependence assumptions between p-values: independence, positive regression dependence, and arbitrary dependence. In this paper, we provide what we believe are the first theoretical results under various notions of negative dependence (negative Gaussian dependence, negative regression dependence, negative association, negative orthant dependence and weak negative dependence). These include the Simes global null test and the Benjamini-Hochberg procedure, which are known experimentally to be anti-conservative under negative dependence. The anti-conservativeness of these procedures is bounded by factors smaller than that under arbitrary dependence (in particular, by factors independent of the number of hypotheses). We also provide new results about negatively dependent e-values, and provide several examples as to when negative dependence may arise. Our proofs are elementary and short, thus amenable to extensions. 
\end{abstract}

\tableofcontents

\section{Introduction}

Ever since the seminal book by~\cite{tukey1953problem}, the subfield of multiple comparisons and multiple hypothesis testing has grown rapidly and found innumerable applications in the sciences.  However, 
some relatively basic theoretical questions remain unsolved. For example, we have not encountered concrete theoretical results on the performance of the Benjamini-Hochberg (BH) procedure~\citep{BH95} when the p-values are negatively dependent. Closely related to the BH procedure is the Simes global null test, for which we have also not seen results under negative dependence. This paper begins to fill the aforementioned gaps, and paves the way for more progress in this area.

One of the main challenges is that there are many definitions of what it means to be ``negatively dependent''.
In the Gaussian setting, the definitions simply amount to the signs of covariances being positive or negative, but one often cares about more nonparametric definitions that apply more generally, and these are aplenty. It is apriori unclear which definition of dependence will  (A) lend itself to analytical tractability for bounding error rates of procedures, (B) have enough examples satisfying the definition so as to potentially yield practical insights in some situations. Once a suitable definition has been adopted, further choices must be made: one must specify whether the dependence is being assumed across all p-values or only those that are null (for example). 
The multitude of possibilities is daunting, perhaps explaining the lack of progress. 

The above combination of (A) and (B) has been arguably successfully achieved for positive dependence. \cite{S98} published an important result settling the Simes conjecture under a notion of positive dependence called multivariate total positivity of order two, that was studied in depth by~\cite{karlin1980classes} in 1980. \cite{BY01} strengthened and extended Sarkar's result: they showed that the BH procedure controls the false discovery rate (FDR) under a weaker condition called positive regression dependence on a subset (PRDS). This notion too goes back several decades to~\cite{lehmann1966some}, who proposed PRD in a bivariate context, and (the elder)~\cite{sarkar1969some}, who generalized PRD to a multivariate context. This paper will provide the first results under the negative dependence analog of the PRD condition.

The aforementioned 2001 paper also proved that under arbitrary dependence, the BH procedure run at target level $\alpha$ on $K$ hypotheses could have its achieved FDR control be inflated a factor of about $\log K$ (sometimes called the Benjamini-Yekutieli or BY correction). This is a huge inflation in modern contexts where $K$ can be in the millions or more. The above results have arguably led to a practical dillema.
When the BH procedure is applied in situations where PRDS is a questionable assumption (or is in fact known to not hold), should one apply the aforementioned BY correction? Theoretically perhaps one should use the correction, but we have rarely seen the BY correction used in practice because it hurts power a lot. 

While we understand the practice of not using the BY correction, the  gap between theory and practice is mildly unsettling. One way out is to seek a better theoretical understanding of what types of assumptions result in inflation factors of much less than $\log K$, along with some justification that these could occur in practice (points (A) and (B) from earlier). 

It is in the above context that we see that the current paper makes some novel and arguably important contributions to the literature. Of course, by virtue of being the first, as far as we are aware, nontrivial result on the performance of Simes and BH under negative dependence, it will hopefully stimulate future progress. (In fact, we are only aware of one other recent mutliple testing work by~\citet{gou2018hochberg} for a different procedure (the Hochberg method) under negative dependence.) But equally importantly, the bounds are derived under a very weak notion of negative dependence (and thus easier to satisfy), and the error inflation factors (or anticonservativeness) are proved to be independent of the number of hypotheses $K$, only involving explicit and small constant factors. Thus, the result is not overly pessimistic, and is a stepping stone to bridging theoretical progress with practical advice. 

 The rest of this paper is organized as follows. Section~\ref{sec:defns-negdep} presents a few key notions of negative dependence, along with some examples of when they occur. Section~\ref{sec:merge-p} presents results on the Simes test using negatively dependent p-values. Section~\ref{sec:merge-e} briefly discusses the case of negatively dependent e-values. Section~\ref{sec:fdr} builds on Section~\ref{sec:merge-p} to derive results on the FDR of the BH procedure under negative dependence. Section~\ref{sec:sims} presents simulation results, before we conclude in Section~\ref{sec:conc}.

\section{Notions of negative dependence}\label{sec:defns-negdep}

Fix an atomless probability space $(\Omega,\mathcal F,\p)$ where all random variables live.   
The aim of this section is to introduce several important notions of negative dependence, summarizing some properties and referencing proofs for the following implications along the way in Figure~\ref{fig:notionsofnegdep}.

\begin{figure}[ht]
    \centering

\begin{tikzpicture} [>=implies]
\pgfsetblendmode{multiply}
    \node[minimum size=8mm] (PCT) at (0,0) {Counter-monotonicity~\eqref{eq:def-counter}};    \node[minimum size=8mm] (PCT1) at (1.5,0) {~~~~~~~~~~};
    \node[minimum size=8mm] (NA) at (0, -1.5) {Negative association~\eqref{eq:def-na}};  
    \node[minimum size=8mm] (NA1) at (1.3, -1.5) {~~~~~~~~~~};
    \node[minimum size=8mm] (NGD) at (6, 0) {Negative Gaussian dependence~\eqref{eq:def-neg-gau}};
        \node[minimum size=8mm] (NGD1) at (3.8, 0) {~~~~~~~~~~};
    \node[minimum size=8mm] (NDTSO) at (6, -1.5){~~~~~~~~~~~~~~~~~~~~~~~~~};
        \node[minimum size=8mm] (NDTSO1) at (4, -1.5){~~~~~~~~~~};
    \node[minimum size=8mm] (NDTSO-sh) at (6, -1.5) {Negative regression dependence~\eqref{eq:nds}};
    \node[minimum size=8mm] (NOD) at (3, -3) {Negative orthant dependence~\eqref{eq:def-nlod}~\mbox{plus}~\eqref{eq:def-nuod}};
       \node[minimum size=8mm] (NOD1) at (1.5, -3) {~~~~~};
       \node[minimum size=8mm] (NOD2) at (4.5, -3) {~~~~~};
    \node[minimum size=8mm] (NLOD) at (0, -4.5) {Negative lower orthant dependence~\eqref{eq:def-nlod}};
    \node[minimum size=8mm] (NUOD) at (7, -4.5) {Negative upper orthant dependence~\eqref{eq:def-nuod}};
    \node[minimum size=8mm] (LWND) at (0, -6) {(Lower) weak negative dependence~\eqref{eq:def-neg}};
    \node[minimum size=8mm] (UWND) at (7, -6) {(Upper) weak negative dependence~\eqref{eq:def-neg-2}};
        \node[minimum size=8mm] (NLOD1) at (1, -4.5) {~~~~};
    \node[minimum size=8mm] (NUOD1) at (5, -4.5) {~~};
    
    \begin{scope}[transparency group]
        \draw[->, double, double distance=1mm]  (PCT)--(NA);
    \end{scope}
    \begin{scope}[transparency group]
        \draw[->, double, double distance=1mm]  (NGD)--(NDTSO);
    \end{scope}
    \begin{scope}[transparency group]
        \draw[->, double, double distance=1mm]  (PCT1)--(NDTSO1);
    \end{scope}
    \begin{scope}[transparency group]
        \draw[->, double, double distance=1mm]  (NGD1)--(NA1);
    \end{scope}  
    \begin{scope}[transparency group]
        \draw[->, double, double distance=1mm]  (NA)--(NOD1);
    \end{scope}
    \begin{scope}[transparency group]
        \draw[->, double, double distance=1mm]  (NDTSO)--(NOD2);
    \end{scope}
    \begin{scope}[transparency group]
        \draw[->, double, double distance=1mm]  (NOD1)--(NLOD1);
    \end{scope}
    \begin{scope}[transparency group]
        \draw[->, double, double distance=1mm]  (NOD2)--(NUOD1);
    \end{scope}
    \begin{scope}[transparency group]
        \draw[->, double, double distance=1mm]  (NLOD)--(LWND);
    \end{scope}
    \begin{scope}[transparency group]
        \draw[->, double, double distance=1mm]  (NUOD)--(UWND);
    \end{scope}
\end{tikzpicture}
\caption{Notions of negative dependence.}
\label{fig:notionsofnegdep}
\end{figure}
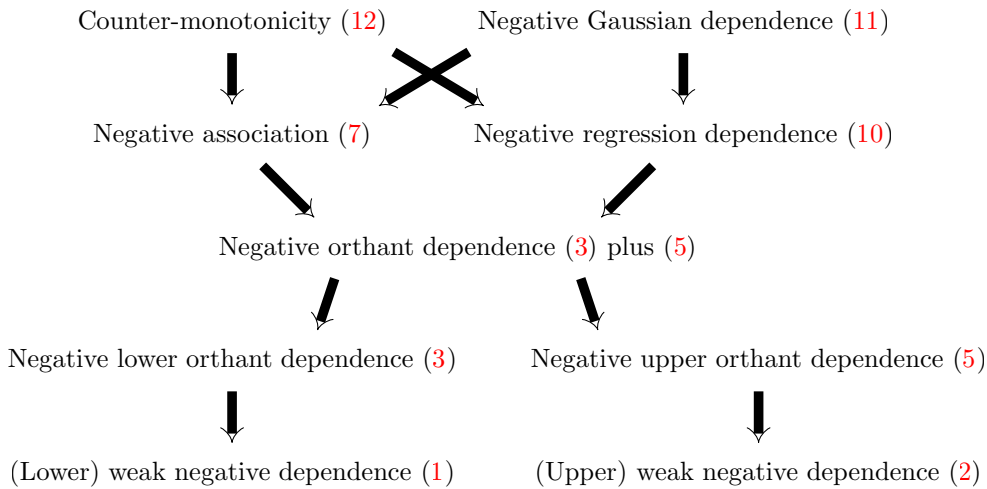

  We define all these notions below, from the weakest to  the strongest. 
  For a  random vector
$\mathbf X=(X_1,\dots,X_K)$,  
  let $F_k$ be the distribution function of $X_k$ for $k\in \mathcal K$.
To begin, we say that 
$\mathbf X$ is  (lower) \emph{weakly negatively dependent} if   
\begin{align}\label{eq:def-neg}
\p\left(\bigcap_{k\in A} \{X_k\le F_k^{-1}(p)\}\right)\le \prod_{k\in A} \p\left(  X_k\le F_k^{-1}(p)\right)  \mbox{~~~for all $A\subseteq \mathcal K$ and $p\in (0,1)$}.
\end{align} 
We will sometimes write ``$X_1,\dots,X_K$ are weakly negatively dependent'' instead of ``$\mathbf X$ is weakly negatively dependent'' (also for other notions of dependence), and this should cause no confusion.
Upper weak negative dependence can be defined by \begin{align}\label{eq:def-neg-2}
\p\left(\bigcap_{k\in A} \{X_k >  F_k^{-1}(p)\}\right)\le \prod_{k\in A} \p\left(  X_k> F_k^{-1}(p) \right)  \mbox{~~~for all $A\subseteq \mathcal K$ and $p\in (0,1)$},
\end{align} but we will only need the lower version \eqref{eq:def-neg}, and so we omit the qualifier ``lower'' going forward.

Condition \eqref{eq:def-neg} is weaker than the notion of  \emph{negative lower orthant dependence} of \cite{BSS82}, which is defined by 
 \begin{align}\label{eq:def-nlod}
\p\left(\bigcap_{k\in \mathcal K } \{X_k\le x_k\}\right)\le \prod_{k\in  \mathcal K  } \p\left(  X_k\le x_k\right) \mbox{~~~for all $(x_1,\dots,x_K) \in \R^K$}.
 \end{align}
Indeed, we can see that \eqref{eq:def-nlod} implies~\eqref{eq:def-neg} by taking $x_k\to \infty$ for $k \notin A$ and $x_k=x$ for $k \in A$. 
Further, $\mathbf X$ is negative lower orthant dependent if and only if \begin{align}
\label{eq:nlod-f}
\E\left[\prod_{k=1}^K \phi_k(X_k)\right] \leq \prod_{k=1}^K \E\left [ \phi_k(X_k)\right ] \mbox{~~for all nonnegative decreasing functions $\phi_1,\dots\phi_K$};
\end{align} see
 Theorem
6.G.1 (b) of \cite{SS07} or Theorem 3.3.16 of \cite{MS02}. All terms like ``increasing'' and ``decreasing'' are in the non-strict sense.

There is a related notion of \emph{negative upper orthant dependence}:
  \begin{align}\label{eq:def-nuod}
\p\left(\bigcap_{k\in \mathcal K } \{X_k> x_k\}\right)\le \prod_{k\in  \mathcal K  } \p\left(  X_k >  x_k\right) \mbox{~~~for all $(x_1,\dots,x_K) \in \R^K$}.
 \end{align}
 Similarly to \eqref{eq:nlod-f}, negative upper orthant dependence is equivalent to  
 \begin{align}
\label{eq:nuod-f}
\E\left[\prod_{k=1}^K \phi_k(X_k)\right] \leq \prod_{k=1}^K \E\left [ \phi_k(X_k)\right ] \mbox{~~for all nonnegative increasing functions $\phi_1,\dots\phi_K$.}
\end{align}

\emph{Negative orthant dependence} means that both negative lower orthant dependence and negative upper orthant dependence hold simultaneously.

Negative orthant dependence is in turn weaker than \emph{negative association} of $\mathbf X$, which requires that for any disjoint subsets $A,B \subseteq \mathcal K$, and any real-valued, coordinatewise increasing functions $f,g$, 
\begin{align}\label{eq:def-na}
\text{Cov}(f(\mathbf X_A),g(\mathbf X_B)) \leq 0,
\end{align}
where $\mathbf X_A=(X_k)_{k\in A}$, $\mathbf X_B=(X_k)_{k\in B}$,
if $f(\mathbf X_A),g(\mathbf X_B)$ have finite second moments. Equivalently, 
\begin{align}\label{eq:def-na2}
\E[f(\mathbf X_A)g(\mathbf X_B)] \leq \E[f(\mathbf 
 X_A)]\E[g(\mathbf  X_B)].
\end{align}
This in turn implies that for any non-overlapping sets $\{A_k\}_{k=1,\dots,\ell}$ and  nonnegative increasing functions $\{\phi_k\}_{k=1,\dots,\ell}$, we have
\begin{equation}\label{eq:na-implication}
\E\left[\prod_{k=1}^\ell \phi_k(\mathbf 
 X_{A_k})\right] \leq 
\prod_{k=1}^\ell\E[\phi_k(\mathbf 
 X_{A_k})].
\end{equation}
It is necessary and sufficient to require $f$ and $g$ in \eqref{eq:def-na} to be bounded, which can be seen from an  approximation argument.  For negatively associated random variables, all pairwise correlations are non-positive. Thus, 
\(
\mathrm{Var}(\sum_{k=1}^K X_k) \leq \sum_{k=1}^K \mathrm{Var}(X_k).
\)
\cite{shao2000comparison} proved the following coupling result. Let $X_1,\dots,X_K$ be negatively associated, and let $X^*_1,\dots,X^*_K$ be independent random variables such that $X_k$ and $X^*_k$ have the same (marginal) distribution for each $k$. 
Then, for all convex functions $f$,
\(
\mathbb E[ f(\sum_{k=1}^K X_k)] \leq \mathbb E[{f(\sum_{k=1}^K X^*_k)}].
\)
 
A random vector $\mathbf X$ is said to be stochastically decreasing in $Y$ if $\mathbb E[g(\mathbf X)\mid Y=y]$ is decreasing in $y$ whenever $g$ is a coordinatewise increasing function such that the conditional expectation exists. $\mathbf X$ is \emph{negative regression dependent} if  
\begin{align}\label{eq:nds}
\text{$\mathbf X^{-i}$ is stochastically decreasing in $X_i$ for every $i$, }
\end{align}
where $\mathbf X^{-i}$ is the vector formed by deleting the $i$-th coordinate of $\mathbf X$. 
The notion in  \eqref{eq:nds} is called negative dependence through stochastic ordering by \cite{block1985concept}, and it is the negative analog of the famous positive regression dependence condition (also called positive  dependence through stochastic ordering) frequently encountered in multiple testing, under which the Simes test and the Benjamini-Hochberg procedure (both defined later in the paper) are known to be conservative. 
The term negative regression dependence was used by \cite{lehmann1966some} in case $K=2$.

A random vector $\mathbf X $ is  \emph{Gaussian dependent} if  there exist increasing functions (or decreasing functions)  $f_1,\dots,f_K$ and
a Gaussian vector $(Y_1,\dots,Y_K)$ such that 
$ X_k=f_k(Y_k)$ for $k\in \mathcal K$.
The correlation matrix of $(Y_1,\dots,Y_K)$ is called a \emph{Gaussian correlation} of $\mathbf X$, which is unique if $\mathbf X$ has continuous marginals.
For instance, if $Y_1,\dots,Y_K$  are standard Gaussian test statistics and $P_1,\dots,P_K$ are the produced one-sided p-values (as $P_k = \Phi(-Y_k)$, where $\Phi$ is the standard Gaussian CDF), then $\mathbf P$ is Gaussian dependent.
 Further, $\mathbf X$ is  \emph{negatively Gaussian dependent} 
    if it is Gaussian dependent
    and its Gaussian correlation coefficients are non-positive ($ X_k=f_k(Y_k)$ for $k\in \mathcal K$):
    \begin{align}\label{eq:def-neg-gau}
    (Y_1,\dots,Y_K) \sim \mathrm N(\mu, \Sigma), \text{ for some $\mu, \Sigma$ such that } \Sigma_{ij} \leq 0 \text{ for all } i\neq j.
\end{align}


If  $\mathbf X$ is negatively Gaussian dependent, then $\mathbf X$ has both negative association  and  negative regression dependence, implying negative lower orthant  dependence and weak negative dependence; see \citet[Section 3.4]{JP83} and also Lemma~\ref{lem:gaussian} in Section~\ref{sec:gaussian}. The   statement on negative orthant dependence can be verified directly by Slepian's lemma~\citep{S62}. 

Finally, there exists an ``extremal (most) negative dependence'': 
$(X,Y)$ is \emph{counter-monotonic} if there exists increasing functions $f,g$ and a random variable $Z$ such that 
 $
(X,Y)=(f(Z),-g(Z))$ almost surely. This can be alternatively stated as 
$(X(\omega)-X(\omega'))(Y(\omega)-Y(\omega'))\le 0$ for almost every pair of $(\omega,\omega')\in \Omega^2.$
In higher dimensions, a random vector $\mathbf X$ is counter-monotonic if each pair of its components is counter-monotonic, that is,
\begin{equation}\label{eq:def-counter}
(X_i(\omega)-X_i(\omega'))(X_j(\omega)-X_j(\omega'))\le 0 \text{ for almost every   $(\omega,\omega')\in \Omega^2$ and every distinct $i,j$.}
\end{equation}
The structure \eqref{eq:def-counter} imposes strong conditions on the marginal distributions (in particular, the marginal distributions cannot be continuous when $K\ge 3$).
A pairwise counter-monotonic random vector has the smallest joint distribution function among all random vectors with the same marginals. See \cite{PW15} for the above statements and other forms of extremal negative dependence.
As shown by \cite{LLW23}, a pairwise counter-monotonic random vector is 
negatively associated 
and negative regression dependent.  

\smallskip

\noindent \textbf{Closure properties.} 
We mention a few relevant closure properties below.

\textit{Monotone transformations:}
All notions of negative dependence 
are preserved under concordant coordinatewise monotonic transformations (the term \emph{concordant} here means that we apply either  decreasing transformations to all coordinates or increasing transformations to all coordinates).  

\textit{Convolution:} Suppose that  $\mathbf X_1$ and $\mathbf X_2$ are independent of each other. 
If each of $\mathbf X_1$ and $\mathbf X_2$ is negative lower orthant dependent, then $\mathbf X_1 + \mathbf X_2$ is also negative lower orthant dependent; see \citep[Theorem 4.2]{M97} and \citep[Corollary 3]{MR22}. 
This also holds for negative association by combining Properties P6 and P7 of \cite{JP83}.

\textit{Concatenation:} If $\mathbf X_1$ and $\mathbf X_2$ are each negatively associated (or negative regression dependent), and are independent of each other, then so is their concatenation $(\mathbf X_1, \mathbf X_2)$.
An example of this type is given by $(X_1,-X_1,X_2,-X_2,\dots,X_K,-X_K)$ for any independent $X_1,\dots,X_K$. 


\textit{Marginalization:} If $\mathbf X$  satisfies any notion of negative dependence mentioned in this section, then so $\mathbf X_A$ for any  nonempty $A\subseteq \mathcal K$. This can be verified directly from the definition of these concepts. 

\subsection*{Examples of negative dependence}
Beyond the Gaussian case mentioned above, some other simple examples may be useful for the reader to keep in mind going forward. These examples can be found in e.g., \cite{JP83}.

\textit{Categorical distribution:} Suppose that $\mathbf X$ is a draw from a categorical distribution with $K$ categories, meaning that it is a binary vector that sums to one. Then $\mathbf X$ is counter-monotonic, thus both negatively associated and negative regression dependent.   

\textit{Multinomial distribution ($m$ balls in $K$ bins):} If $\mathbf X$ is a draw from a multinoulli distribution, meaning that it takes values in $\{0,1\}^K$ and  $\sum_{i=1}^K X_i = m$, with each of the $K \choose m$ possibilities being equally likely, then $\mathbf X$ is negatively associated as well as negative regression dependent. If the possibilities are not all equally likely, $\mathbf X$ is negatively orthant dependent~\citep{BSS82}. The same conclusions hold for a draw from a multinomial distribution, since these are just sums of mutually independent multinoullis.

\textit{Uniform permutations:} Let $\mathbf X$ be a uniformly random permutation of some fixed vector $(x_1,\dots,x_n)$. Then $\mathbf X$ is negatively associated. 

\textit{Sampling without replacement:} Along similar lines to the above, sampling without replacement leads to negatively associated random variables. To elaborate: suppose $X_1,\dots,X_K$ are sampled without replacement from a bag containing $N\ge K$ numbers. Then $\mathbf X$ is negatively associated. 

\textit{Recentered Gaussians:} \cite{block1985concept} pointed out that if $X_1,\dots,X_K$ are iid Gaussians, and $\bar X:= (X_1+\dots+X_K)/K$, then $\mathbf Z:=(X_1-\bar X,\dots,X_K - \bar X)$ is multivariate Gaussian with negative correlations, and thus negative regression dependent and negatively associated, for example. 
Hence, so are the p-values obtained  by  $P_k=\Phi(Z_k/(1-1/K)^{1/2} ) $ for $k\in \mathcal K$. 
\cite{block1985concept} also show that several other examples, like multivariate negative binomial, Dirichlet and multivariate hypergeometric, are all negative regression dependent.

\textit{Tournament performance scores:}  Data summarizing tournament performance are often negatively associated. We summarize some examples below. Consider a round-robin tournament between $K$ players, summarized by a pairwise game matrix $X$ of size $K \times K$. 
The first three examples below can be shown using Properties P6 and P6 of \cite{JP83}, by noting that the scores $X_{ij}$ and $X_{ji}$ are counter-monotonic, and they are independent of the other scores. 
Therefore, scores in the $K\times K$ matrix  are negatively associated \citep[P7]{JP83}, and so are their row sums \citep[P6]{JP83}.
The last example is shown by \cite{MR22}. 
 
 \textit{Binary outcomes.} Suppose each game ends in a win or loss. Let $p_{ij}$ denote the probability that $i$ beats $j$ and they play $n_{ij}$ games against each other. Assuming that all games are independent, we have $X_{ij}\sim \text{Binomial}(n_{ij},p_{ij})$. Let us calculate scores of the $K$ players as $S_i = \sum_{j \neq i} X_{ij}$, and denote $\mathbf{S} = (S_1,\dots,S_K)$. Then $\mathbf S$ is negatively associated. (This actually improves a not-so-well known result by~\cite{huber1963remark} who proved $\mathbf S$ is negative lower orthant dependent.)  

 \textit{Constant sum games.} Suppose at the end of their game(s), each pair of players split a reward $r_{ij} \geq 0$, meaning that the rewards $X_{ij},X_{ji}$ are nonnegative and sum to $r_{ij}$. Defining each player's scores as before, $S_i = \sum_{j \neq i} X_{ij}$, we have that $\mathbf{S}$ is negatively associated. (Obviously this example generalizes the previous one, and even allows for ties.) 

 \textit{Random-sum games.} If the aforementioned rewards $r_{ij}$ are themselves random variables, $\mathbf{S}$ remains negatively associated, as long as $(X_{ij},X_{ji})$ is counter-monotonic. This happens in soccer, where the winning team is often awarded three points (and the losing team zero), but if the match is drawn, both teams get one point. This means that $(X_{ij},X_{ji})$ can take the value $(3,0)$, $(1,1)$ or $(0,3)$.

 \textit{Knockout tournaments.} Moving beyond round-robin tournaments to knockout tournaments like in tennis grand slams, let $S_i$ denote the total number of games won by player $i$. For example, with 64 players, only the winner $i$ will have $S_i=6$, the runner-up $j$ will have $S_j=5$, the semifinal losers $k$ will have $S_k=4$, and those that lost in the first round have $S_\ell=0$. Suppose further that all players are of equal strength, meaning that all outcomes are fair coin flips. For a completely random schedule of matches,  $\mathbf S$ is negatively associated. For nonrandom draws (such as via player seedings/rankings), $\mathbf S$ is negative orthant dependent.

\bigskip

Appendix A in the supplementary material contains hypothesis tests that give rise to negative dependence. The examples discussed there are tests based on split samples, testing the mean of a bag of numbers, round-robin tournaments, and cyclical or ordered comparisons.

\section{Merging negatively dependent p-values}\label{sec:merge-p}

We begin with a recap of some well known properties of the Simes global null test, before turning to the new results under negative dependence.

\subsection{Recap: merging p-values with the Simes function} 
 Throughout, $K$ is a positive integer,
 and $\mathbf P=(P_1,\dots,P_K)$  is a random vector taking values in $[0,1]^K$. 
 Let $\p$ be the true probability measure and 
write $\mathcal K=\{1,\dots,K\}$.
 Following \cite{VW20}, 
a \emph{p-variable} $P$ is a  random variable  that satisfies $\p(P\le \alpha)\le \alpha$ for all $\alpha \in (0,1)$. 
Let $\mathcal U$ be the set of all standard uniform random variables under $\p$.
  
 We first consider the setting of testing a global null. 
In this setting, we will always assume each of $P_1,\dots,P_K$ is uniformly distributed on $[0,1]$ (thus in $\mathcal U$), and this is without loss of generality. 
Slightly abusing the terminology, we also call $P_1,\dots,P_K$ p-values.

For $p_1,\dots,p_K\in [0,1]$ and  $k\in \mathcal K$, let $p_{(k)}$ be the $k$-th order statistics of $p_1,\dots,p_K$  from the smallest to the largest.  
   Let $S_K:[0,1]^K\to [0,1]$ be the \emph{Simes function}, defined as 
   $$
   S_K(p_1,\dots,p_K)= \bigwedge_{k=1}^K \frac{K}{k} p_{(k)},
   $$
   where $a\wedge b := \min(a,b)$.
  Applying $S_K$ to $\mathbf P$ and choosing a fixed threshold $\alpha\in (0,1)$, we obtain the \emph{Simes test}  by rejecting the global null if $S_K(\mathbf P)\le \alpha$. The type-1 error of this test is $\p(S_K(\mathbf P)\le \alpha)$. 
   
We begin from the observation that the Simes inequality  
\begin{align} \label{eq:simes}
\p( S_K(\mathbf P)\le\alpha)\le   \alpha 
\mbox{~~~ for all $\alpha \in (0,1)$}
\end{align}
holds for a wide class of dependence structures of $\mathbf P$. 
 It is shown by \cite{S86} that if p-values $P_1,\dots,P_K$ are independent or comonotonic (thus identical), then
 \begin{align}
\label{eq:simes2}
\p( S_K(\mathbf P)\le\alpha) = \alpha 
\mbox{ for all $\alpha \in (0,1)$,}
\end{align} 
and thus \eqref{eq:simes} holds as an equality.  
Moreover, the inequality \eqref{eq:simes} holds for more general dependence structures; see e.g., \cite{S98} and \cite{BY01}. {The inequality \eqref{eq:simes} may also hold in an asymptotic sense; see e.g., \cite{finner2017simes}.}
Let us define the notion of \emph{positive regression dependence} (PRD). 
A set $A\subseteq \R^K$ is said to be \emph{increasing}
if $\mathbf x\in A$ implies $\mathbf y\in A$ for all   $\mathbf y\ge \mathbf x$.   
  A random vector $\mathbf P$ of p-values is PRD if for any  $k\in \mathcal K$ and   increasing set $A  \subseteq  \R^K$, the
function $x\mapsto \p(\mathbf P\in A\mid P_k\le x)$ is increasing on $ [0,1]$.  
 
\begin{proposition}[\cite{BY01}]\label{prop:PRD}
If the  vector of p-values $\mathbf P$ is PRD, then \eqref{eq:simes} holds. 
\end{proposition}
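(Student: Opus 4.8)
The plan is to recast the Simes rejection event in the language of the Benjamini--Hochberg procedure and then extract the single place where positive dependence is needed via a telescoping argument. Write $c_k = k\alpha/K$ for $k\in\mathcal K$, and define the last-crossing index $R = \max\{k\in\mathcal K : P_{(k)}\le c_k\}$, with the convention $R=0$ when no such $k$ exists. The definition of the Simes function gives immediately that $\{S_K(\mathbf P)\le\alpha\}=\{R\ge 1\}$, so the target becomes $\p(R\ge 1)\le\alpha$. First I would record the structural fact that $R$ is a coordinatewise \emph{decreasing} function of $\mathbf P$ (shrinking any coordinate can only lower each order statistic and hence enlarge $\{k:P_{(k)}\le c_k\}$), so that each event $\{R\ge m\}$ is a decreasing subset of $\R^K$ and each $\{R\le m\}$ is increasing.

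Next I would establish the decomposition identity
\[
\p(R\ge 1) = \sum_{i=1}^K\sum_{k=1}^K \frac{1}{k}\,\p(P_i\le c_k,\ R=k).
\]
The combinatorial input is that on $\{R=k\}$ exactly $k$ of the p-values lie at or below $c_k$: indeed $P_{(k)}\le c_k$ while $P_{(k+1)}>c_{k+1}>c_k$, so $\sum_{i}\id\{P_i\le c_k\}\id\{R=k\}=k\,\id\{R=k\}$, and summing the weight $1/k$ over the $k$ rejected indices reassembles $\sum_k\p(R=k)=\p(R\ge 1)$.

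The core is a per-hypothesis bound. Since every $P_i$ is uniform under the global null, $\p(P_i\le c_k)=c_k=k\alpha/K$, and I would rewrite the inner sum as
\[
\sum_{k=1}^K \frac{1}{k}\,\p(P_i\le c_k,\ R=k) = \frac{\alpha}{K}\sum_{k=1}^K \p(R=k \mid P_i\le c_k).
\]
Here PRD enters. Writing $\p(R=k\mid P_i\le c_k)=\p(R\ge k\mid P_i\le c_k)-\p(R\ge k+1\mid P_i\le c_k)$ and using that $\{R\ge k+1\}$ is a decreasing set, PRD forces $x\mapsto\p(R\ge k+1\mid P_i\le x)$ to be decreasing, so $\p(R\ge k+1\mid P_i\le c_k)\ge\p(R\ge k+1\mid P_i\le c_{k+1})$ because $c_{k+1}>c_k$. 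Replacing the subtracted term by this smaller quantity gives $\p(R=k\mid P_i\le c_k)\le\p(R\ge k\mid P_i\le c_k)-\p(R\ge k+1\mid P_i\le c_{k+1})$, and the sum over $k$ telescopes (the top term vanishes since $R\le K$) to $\p(R\ge 1\mid P_i\le c_1)\le 1$. Thus each inner sum is at most $\alpha/K$, and summing over the $K$ hypotheses yields $\p(R\ge 1)\le\alpha$.

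I expect the telescoping step to be the main obstacle, not for its length but because it is the only genuine use of PRD and the monotonicity directions are easy to invert: one must keep straight that $\{R\ge m\}$ (rather than $\{R\le m\}$) is the decreasing event, that PRD then makes its conditional probability decrease in the conditioning threshold in exactly the direction that lets the cross terms cancel, and that the mismatch $c_k\ne c_{k+1}$ is precisely what this monotonicity is invoked to absorb. The uniformity of the marginals, valid here only because we are testing the global null, is equally essential in passing to the clean factor $\alpha/K$, and I would flag it explicitly rather than let it pass silently.
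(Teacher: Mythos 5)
Your proof is correct: the recasting of the Simes event as $\{R\ge 1\}$, the decomposition $\p(R\ge 1)=\sum_i\sum_k \tfrac1k\,\p(P_i\le c_k,\,R=k)$, and the PRD-driven telescoping bound are all sound, including the monotonicity directions and the use of uniform marginals. The paper itself offers no proof — it cites \cite{BY01} — and your argument is precisely the Benjamini--Yekutieli argument (their FDR proof under PRDS) specialized to the global null, where FDR coincides with the Simes type-1 error, so this is essentially the same approach as the paper's source.
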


If $\mathbf P$ is Gaussian dependent (i.e., obtained from jointly Gaussian statistics; see
 Section~\ref{sec:gaussian})
and its pair-wise correlations are non-negative, 
then $\mathbf P$ satisfies PRD. In this case,  \eqref{eq:simes} holds by Proposition~\ref{prop:PRD}.   
When the correlations are allowed to be negative, things are slightly different:
\cite{HR95} showed that, for $K=2$ and some Gaussian-dependent $\mathbf P$ with negative correlation, 
\begin{equation}\label{eq:simes-empirical}
  \p( S_2(\mathbf P)\le0.05)  \approx 0.0501.
\end{equation}
Thus, \eqref{eq:simes} is slightly violated. The maximum value of $\p( S_K(\mathbf P)\le\alpha)  $
over all possible dependence structures of $\mathbf P$ is known  (\cite{H83}) to be:
\begin{align}\label{eq:simes3}
\max_{\mathbf P \in \mathcal U^K} \p( S_K(\mathbf P)\le\alpha) &= (\ell_K \alpha)\wedge 1
\mbox{~~~ for  $\alpha \in (0,1)$,} \\
\text{where  } \ell_K &:=\sum_{k=1}^K \frac 1k \approx \log K. \label{eq:ellK}
 \end{align}
There are several other methods of merging p-values under arbitrary dependence \citep{VW20,VWW22}.
 Here, we focus on negatively dependent p-values and e-values.

\subsection{Simes under negative dependence}
 
We next give a nontrivial upper bound on $\p(S_K(\mathbf P)\le \alpha)$ when $\mathbf P$ is weakly negatively dependent.

\begin{theorem}[Additive error bounds]\label{th:neg}
For every weakly negatively dependent   $\mathbf P\in \mathcal U^K$, we have  
\begin{align}\label{eq:simes-general}
\p( S_K(\mathbf P)\le\alpha)\le 
\alpha + \sum_{k=2}^K\binom{K}{k} \left(\frac{\alpha k}{K}\right)^k.
\end{align}
We can obtain the more succinct bound that does not depend on $K$,
\begin{align} \label{eq:simes-neg}
\p( S_K(\mathbf P)\le\alpha)\le  
   \alpha + 2 \alpha^ 2   + \frac{9}{2}\alpha^3 +   \frac{1}{  \sqrt{8\pi}}\frac{(e\alpha)^4}{1-e\alpha}
\mbox{~~~~for all $\alpha \in (0,1/e)$},
\end{align}
and in particular, 
\begin{equation}
    \label{eq:simple} 
\p(S_K(\mathbf P)\le \alpha)\le \alpha+2\alpha^2+ 6\alpha^3 \mbox{~~~for all $\alpha \in (0, 0.1].$}
\end{equation} 
Since there exists a weakly dependent $\mathbf P\in \mathcal U^K$ (in particular, independent uniforms) such that $\p(S_K(\mathbf P) \le \alpha) = \alpha$, the above bounds are tight up to lower order terms in $\alpha$.
\end{theorem}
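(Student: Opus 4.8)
The plan is to prove \eqref{eq:simes-general} first by two nested union bounds, and then to derive \eqref{eq:simes-neg} and \eqref{eq:simple} as purely analytic consequences. First I would rewrite the rejection event in terms of order statistics. Writing $t_k := \alpha k/K$, the definition $S_K(\mathbf P)=\bigwedge_{k=1}^K \frac{K}{k}P_{(k)}$ gives the exact identity
\[
\{S_K(\mathbf P)\le\alpha\}=\bigcup_{k=1}^K\{P_{(k)}\le t_k\},
\]
since $S_K(\mathbf P)\le\alpha$ holds iff $\frac{K}{k}P_{(k)}\le\alpha$ for some $k$. A union bound over $k$ then yields $\p(S_K(\mathbf P)\le\alpha)\le\sum_{k=1}^K\p(P_{(k)}\le t_k)$.

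Next I would bound each term. The event $\{P_{(k)}\le t_k\}$ says that at least $k$ of the coordinates lie below $t_k$, i.e. $\{P_{(k)}\le t_k\}=\bigcup_{A\subseteq\mathcal K,\,|A|=k}\bigcap_{i\in A}\{P_i\le t_k\}$. A second union bound over the $\binom{K}{k}$ subsets, followed by weak negative dependence \eqref{eq:def-neg} applied with the common threshold $x=t_k$, gives
\[
\p(P_{(k)}\le t_k)\le\sum_{|A|=k}\p\Big(\bigcap_{i\in A}\{P_i\le t_k\}\Big)\le\sum_{|A|=k}\prod_{i\in A}\p(P_i\le t_k)=\binom{K}{k}t_k^{\,k},
\]
where the last step uses that each $P_i$ is uniform and $t_k=\alpha k/K\le\alpha<1$. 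Summing over $k$ and isolating the $k=1$ term, which equals $\binom{K}{1}(\alpha/K)^1=\alpha$, produces exactly \eqref{eq:simes-general}. Note that negative dependence enters only in this one factorization step; everything else is a union bound, so the argument is short and robust.

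For the succinct bound \eqref{eq:simes-neg} I would dominate each summand by a $K$-free quantity via $\binom{K}{k}\le K^k/k!$, giving $\binom{K}{k}(\alpha k/K)^k\le(k\alpha)^k/k!$. I would keep the $k=2$ and $k=3$ terms exactly, namely $(2\alpha)^2/2!=2\alpha^2$ and $(3\alpha)^3/3!=\tfrac92\alpha^3$, and for $k\ge4$ apply Stirling's lower bound $k!\ge\sqrt{2\pi k}\,(k/e)^k$ to obtain $(k\alpha)^k/k!\le(e\alpha)^k/\sqrt{2\pi k}\le(e\alpha)^k/\sqrt{8\pi}$. Summing the resulting geometric tail $\sum_{k\ge4}(e\alpha)^k=(e\alpha)^4/(1-e\alpha)$, valid precisely when $\alpha<1/e$, yields \eqref{eq:simes-neg}. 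Finally, \eqref{eq:simple} follows from \eqref{eq:simes-neg} by showing that for $\alpha\le0.1$ the last two terms are dominated by $6\alpha^3$; this reduces to $\frac{e^4\alpha}{\sqrt{8\pi}\,(1-e\alpha)}\le\frac32$, whose left side is increasing in $\alpha$ and which I would verify at the endpoint $\alpha=0.1$.

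I expect the only delicate point to be the bookkeeping in the last step: the constant $6$ in \eqref{eq:simple} is essentially tight at $\alpha=0.1$ (the endpoint inequality holds with very little slack, the left side being about $1.496<1.5$), so one must resist over-bounding the $k=2,3$ terms and be careful about where Stirling is invoked. The probabilistic content, by contrast, is entirely contained in the single application of \eqref{eq:def-neg}, where I would anticipate no obstacle.
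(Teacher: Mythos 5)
Your proposal is correct and follows essentially the same route as the paper's proof: the double union bound over $k$ and over size-$k$ subsets, the single application of weak negative dependence with the common threshold $\alpha k/K$, the bound $\binom{K}{k}\le K^k/k!$ with Stirling for $k\ge 4$, and the geometric tail sum. The only cosmetic difference is that the paper momentarily retains the exact factors $\tfrac{2(K-1)}{K}$ and $\tfrac{9(K-1)(K-2)}{2K^2}$ before relaxing them to $2$ and $\tfrac92$, and leaves the final step to ``direct computation,'' which your endpoint check at $\alpha=0.1$ makes explicit.
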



Recall from~\eqref{eq:ellK} that since $\p( S_K(\mathbf P)\le\alpha) \leq \ell_K \alpha$ under any dependence structure, the bounds above and below can be improved for small $K$ by taking their minimum with $\ell_K \alpha$, but we often omit this for clarity.
The above bounds also imply the following multiplicative error bounds. 
\begin{corollary}[Multiplicative error bounds]\label{cor:mult-error}
For every weakly negatively dependent   $\mathbf P\in \mathcal U^K$, we have  
\begin{equation}\label{eq:multiplicative-simes-1}
\p( S_K(\mathbf P)\le\alpha) \leq 1.26\alpha \text{~ ~ ~ for all } \alpha \in (0,0.1],
\end{equation}
and also
\begin{equation}\label{eq:multiplicative-simes-2}
\p( S_K(\mathbf P)\le\alpha) \leq 3.4 \alpha \text{~ ~ ~ for all } \alpha \in (0,1),
\end{equation}
meaning that $(3.4 \wedge \ell_K) S_K(\mathbf P)$ is a p-value for any $K$.
\end{corollary}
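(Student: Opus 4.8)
The plan is to establish Theorem~\ref{th:neg} first, since both corollaries follow from it by elementary numerical estimates. The key structural observation is the union-bound decomposition of the Simes rejection event. Writing $p_{(k)}$ for order statistics, the event $\{S_K(\mathbf P)\le\alpha\}$ equals $\bigcup_{k=1}^K \{p_{(k)}\le \alpha k/K\}$. By a standard combinatorial identity for order statistics, the event $\{p_{(k)}\le t\}$ means at least $k$ of the $P_i$ fall below $t$; more usefully, I would peel off the contribution of each order statistic and bound the probability that exactly (or at least) some number of p-values are simultaneously small. Specifically, for each $k\in\{1,\dots,K\}$ I want to control $\p(\text{at least } k \text{ of the } P_i \le \alpha k/K)$, because crossing the Simes threshold at level $k$ requires precisely this. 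The total probability is then bounded by summing, over $k=1,\dots,K$, the probability of the incremental event that the threshold is first crossed at the $k$-th order statistic.

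First I would isolate the $k=1$ term, which contributes exactly $\p(\min_i P_i \le \alpha/K)\le \sum_i \p(P_i\le\alpha/K)=K\cdot(\alpha/K)=\alpha$ by the union bound and uniformity; this produces the leading $\alpha$. For $k\ge 2$, weak negative dependence~\eqref{eq:def-neg} is exactly the tool needed: the probability that a specific set $A$ of $k$ indices all satisfy $P_i\le \alpha k/K$ is bounded by $\prod_{i\in A}\p(P_i\le \alpha k/K)=(\alpha k/K)^k$, using uniformity of the marginals. Summing over the $\binom{K}{k}$ choices of which $k$ indices are small gives the term $\binom{K}{k}(\alpha k/K)^k$, which is precisely the $k$-th summand in~\eqref{eq:simes-general}. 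The care required here is to argue that the incremental event at level $k$ is contained in the event that \emph{some} set of $k$ indices has all its p-values at or below $\alpha k/K$, so that the weak-negative-dependence inequality applies term by term without double counting beyond what the union bound already permits.

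The main obstacle is the passage from the exact-but-unwieldy bound~\eqref{eq:simes-general} to the clean, $K$-independent bounds~\eqref{eq:simes-neg} and~\eqref{eq:simple}. This requires estimating the tail $\sum_{k=2}^K \binom{K}{k}(\alpha k/K)^k$ uniformly in $K$. I would bound $\binom{K}{k}\le K^k/k!$, giving $\binom{K}{k}(\alpha k/K)^k \le (\alpha k)^k/k!$, which crucially eliminates $K$. Then I need $\sum_{k=2}^\infty (\alpha k)^k/k!$; the $k=2$ and $k=3$ terms give $2\alpha^2$ and $\tfrac{9}{2}\alpha^3$ respectively, and for the tail $k\ge 4$ I would invoke Stirling's approximation $k!\ge \sqrt{2\pi k}\,(k/e)^k$ to get $(\alpha k)^k/k!\le (e\alpha)^k/\sqrt{2\pi k}$, and then sum the resulting geometric-type series, bounding $\sqrt{2\pi k}\ge\sqrt{8\pi}$ for $k\ge 4$ to extract the factor $\tfrac{1}{\sqrt{8\pi}}(e\alpha)^4/(1-e\alpha)$. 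The restriction $\alpha<1/e$ is what guarantees convergence of this geometric series. The simpler bound~\eqref{eq:simple} then follows by crudely bounding the combined cubic-and-higher contribution by $6\alpha^3$ on the interval $(0,0.1]$, a routine monotonicity check.

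Finally, for Corollary~\ref{cor:mult-error}, I would divide~\eqref{eq:simple} and~\eqref{eq:simes-neg} by $\alpha$. For~\eqref{eq:multiplicative-simes-1}, on $(0,0.1]$ we get $\p(S_K(\mathbf P)\le\alpha)/\alpha \le 1+2\alpha+6\alpha^2$, which is increasing in $\alpha$ and hence maximized at $\alpha=0.1$, yielding $1+0.2+0.06=1.26$. For~\eqref{eq:multiplicative-simes-2} over the full range $(0,1)$, I would combine the regime $\alpha\in(0,0.1]$ (where $1.26\alpha\le 3.4\alpha$ trivially holds) with the regime $\alpha\in(0.1,1)$: here I would either use~\eqref{eq:simes-neg} where it applies, or simply note that $\p(S_K(\mathbf P)\le\alpha)\le 1 \le 3.4\alpha$ whenever $\alpha\ge 1/3.4\approx 0.294$, and handle the intermediate window $(0.1,0.294)$ via the explicit bound~\eqref{eq:simes-neg} together with the trivial bound $\ell_K\alpha$ for small $K$. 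The concluding statement that $(3.4\wedge\ell_K)S_K(\mathbf P)$ is a p-variable is then immediate: by definition we need $\p((3.4\wedge\ell_K)S_K(\mathbf P)\le\beta)\le\beta$ for all $\beta$, which upon setting $\alpha=\beta/(3.4\wedge\ell_K)$ reduces to the established multiplicative bound combined with~\eqref{eq:simes3}.
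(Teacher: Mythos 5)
Your proposal is correct and follows essentially the same route as the paper's proof: the Bonferroni decomposition over order statistics, weak negative dependence applied to each $k$-wise intersection to get $\sum_{k}\binom{K}{k}(\alpha k/K)^k$, the bound $\binom{K}{k}\le K^k/k!$ with Stirling to remove the dependence on $K$, and then the same numerical verifications (maximizing $1+2\alpha+6\alpha^2$ at $\alpha=0.1$ for the $1.26$ bound, and splitting $(0,1)$ into regimes, using the trivial bound $1\le 3.4\alpha$ for $\alpha\gtrsim 0.294$, for the $3.4$ bound). The only cosmetic differences are that the paper retains the exact binomial coefficients for $k=2,3$ rather than using $K^k/k!$ there (both yield $2\alpha^2$ and $\tfrac{9}{2}\alpha^3$), and the paper compresses your regime-splitting verification of the $3.4$ constant into a one-line ``we can also verify'' via the function $\tilde s_K$.
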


Before we present the proof, a few comments are in order. For $K=2$, the right hand side of~\eqref{eq:simes-general} becomes $\alpha+\alpha^2$, which equals 0.0525 for $\alpha=0.05$. Despite the theorem holding under weakest form of negative dependence, this value is not so far from the empirically observed value in~\eqref{eq:simes-empirical} for negative Gaussian dependence.
Also, for all practical $\alpha$, the Simes combination results in a valid p-value up to the small constant factor 1.26. However, to formally call it a p-value, the constant is at most 3.4 (though this could potentially be lowered closer to 1 through better approximations).

\begin{proof}[Proof of Theorem~\ref{th:neg} and Corollary~\ref{cor:mult-error}.]
Define $c_k = k\alpha/K$ for  $k\in \mathcal K$.   
Note that for $k\in \mathcal K$, 
$ \{P_{(k)} \leq c_k\}   = \bigcup_{A\in B_k} \bigcap_{j\in A}\{P_{j} \leq c_k\} $, where $B_k=\{A\subseteq \mathcal K:|A|=k\}$ and $|A|$ is the cardinality of $A$.
Bonferroni's inequality gives
\begin{align*}
    \p(S_K(\mathbf P)\le\alpha) 
    &= \mathbb{P}\left(\bigcup_{k=1}^K \{P_{(k)} \leq c_k\}  \right) \\
    &\le \sum_{k=1}^K \mathbb{P}\left(  P_{(k)} \leq c_k  \right) = \sum_{k=1}^K\p  \left(\bigcup_{A\in B_k} \bigcap_{j\in A}\{P_{j} \leq c_k\}  \right).
\end{align*} 
Applying the Bonferroni inequality for every union and \eqref{eq:def-neg}  for every intersection, we  get 
\begin{align}\label{eq:HR}
    \p(S_K(\mathbf P)\le\alpha) 
     &\leq
    \sum_{k=1}^K \sum_{ A\in B_k }\p  \left(\bigcap_{j\in A}\{P_{j} \leq c_k\}  \right)  
   \notag \\
    & \le  \sum_{k=1}^K  \sum_{ A\in B_k }  \prod_{j\in A}\p(P_{j} \leq c_k    )  =   \sum_{k=1}^K\binom{K}{k} c_k^k.
\end{align} 
This shows \eqref{eq:simes-general}.
Note that, for   integers $n\ge k$,
\begin{align*}
    \binom{n}{k} = \frac{n!}{k!(n-k)!} =  \frac{n(n-1)\dots(n-k+1)}{k!} \leq \frac{n^k}{k!}.
\end{align*} 
 Stirling's approximation yields 
\begin{align}\label{eq:BinUbd}
    \binom{n}{k} \leq \frac{n^k}{k!} \leq \frac{n^k}{\sqrt{2\pi k} k^k e^{-k} e^{1/(12k+1)}} \leq \frac{n^k}{\sqrt{2\pi k} k^k e^{-k}}  = \frac{1}{\sqrt{2\pi k}} \left(\frac{en}{k}\right)^k.
\end{align}  
Applying \eqref{eq:BinUbd} to each term of \eqref{eq:HR}  except for the first three terms, we get
\begin{align}
   &\p( S_K(\mathbf P)\le\alpha)   \notag 
    \\&\leq \alpha + \frac{2(K-1)}{K} \alpha^2+
      \frac{9(K-1)(K-2)}{ 2 K^2} \alpha^3 + 
     \sum_{k=4}^ K  \frac{1}{\sqrt{2\pi k}}
    \left(\frac{eK}{k}\right)^k\left(\frac{k}{K} \alpha\right)^k  \notag \\ 
    &= \alpha  +\frac{2(K-1)}{K} \alpha^2+
      \frac{9(K-1)(K-2)}{ 2 K^2} \alpha^3+ \sum_{k=4}^ K \frac{1}{\sqrt{2\pi k}} \left(
   e \alpha\right)^k. \notag
 \\  &  \le \alpha  +2  \alpha^2+
      \frac{9 }{ 2  } \alpha^3+ \sum_{k=4}^\infty \frac{ \left(
   e \alpha\right)^k }{\sqrt{2\pi k}}. \label{eq:sk_neg-use}
\end{align}  
Therefore, by noting that 
$  \sum_{k=4}^\infty \left(
   e \alpha\right)^k    =  {(e\alpha)^4}/({1-e\alpha})
$ for $\alpha < 1/e$,
\eqref{eq:sk_neg-use} implies the inequality \eqref{eq:simes-neg},
and  \eqref{eq:simple} follows from \eqref{eq:simes-neg} by direct computation.

Since any probability is no larger than $1$, an upper bound on the probability in \eqref{eq:simes-neg}  
for all $\alpha\in(0,1)$ is given by the following function 
\begin{align}\label{eq:sk_neg}
    \tilde s_K(\alpha): =\min\left\{\alpha + 2 \alpha^ 2   + \frac{9}{2}\alpha^3 +   \frac{1}{  \sqrt{8\pi}}\frac{(e\alpha)^4}{(1-e\alpha)_+},1\right\},
\end{align}
where $1/0=\infty$ (i.e., the upper bound is $1$ when $\alpha\ge 1/e$).

By \eqref{eq:simple}, $\tilde s_K(\alpha)/\alpha\le 1.26$ 
 for $\alpha \le 0.1$, 
and thus the multiplier to correct for negative dependence is at most $1.26$ for relevant values of $\alpha$. We can also verify $\tilde s_K(\alpha)/\alpha\le 3.4$ for all $\alpha\in (0,1)$.
 \end{proof}

The values of $\tilde s_K(\alpha)$ for common choices of $\alpha\in \{0.01,0.05,0.1\}$, as well as the values of $\alpha$ corresponding to $\tilde s_K(\alpha)\in \{0.01,0.05,0.1\}$, are given in Table~\ref{tab:simes-neg}.  
As we can see from the table, the simple formula \eqref{eq:simple} is a quite accurate approximation of \eqref{eq:simes-neg}.  

\begin{table}[H]
    \centering\renewcommand{\arraystretch}{1.5}

    \begin{tabular}{c|cccccc}
        $ \alpha$  & 0.0098 & 0.01 & 0.0454 & 0.05 & 0.0830  & 0.1    \\\hline
          $\tilde s_K(\alpha)$ & 0.01&  0.0102  & 0.05 & 0.0556 & 0.1 & 0.1260 \\\hline
          $\alpha+2\alpha^2+6\alpha^3$ & 0.0100 & 0.0102 & 0.0501 & 0.0558  & 0.1053 & 0.1260 \\\hline
         $\tilde s_K(\alpha)/\alpha$ &1.020 &  1.020  & 1.101  & 1.112 & 1.205  & 1.260\\\hline
    \end{tabular}
    \caption{Values of the upper bounds in Theorem~\ref{th:neg}}
    \label{tab:simes-neg}
\end{table} 

\begin{remark}
It is clear from the proof of Theorem~\ref{th:neg} that it suffices to require \eqref{eq:def-neg} to hold for $p\in [0,\alpha]$ to obtain  the upper bound in Theorem~\ref{th:neg}. That is, we only need weak negative dependence to hold when all components of $\mathbf P$ are small than or equal to $\alpha$.
\end{remark}

\begin{remark}\label{rem:r2}
Consider a general procedure that rejects the null hypothesis if $p_{(k)}\le t_k(\lambda)$  for some $k\in \mathcal K$,  where $t_k (\lambda) >0$ is a threshold that depends on  a parameter $\lambda.$
The Simes test is a special case that corresponds to $t_k(\lambda) =k\lambda$ with $\lambda = \alpha/K$.
Such procedures are related to the 
joint error rate control of 
\cite{blanchard2020}.
Inspecting the proof of Theorem \ref{th:neg}, we note that \eqref{eq:HR} holds with any choices of $(c_k)_{k\in \mathcal K}$, in particular with $c_k=t_k(\lambda)$. 
Therefore,  the argument for \eqref{eq:sk_neg-use} yields that,  under weak negative dependence, the above procedure has a type-I error rate bounded by 
$$u(\lambda)=Kt_1(\lambda) + \frac{K(K-1)}{2} \left(t_2(\lambda)\right)^2+
      \frac{K(K-1)(K-2)}{ 6 } \left(t_3(\lambda)\right)^3  + 
     \sum_{k=4}^ K  \frac{1}{\sqrt{2\pi k}}
    \left(\frac{eK}{k}\right)^k\left(t_k(\lambda)\right)^k .
    $$
    Choosing $\lambda $ such that $u(\lambda)\le \alpha$ (this is possible if e.g.,  $t_k(\lambda)$ is increasing in $\lambda$ with $t_k(0)=0$ for each $k$) yields a procedure that controls the type-I error at $\alpha$ under weak negative dependence. 
\end{remark}

The analysis in Remark \ref{rem:r2} and \eqref{eq:HR}--\eqref{eq:BinUbd} immediately leads to the following result.
\begin{proposition}\label{prop:ruger}
Fix $k\in \mathcal K$. 
Consider the procedure that rejects the global null if $p_{(k)}\le t$, where $t>0$. If   
$t  \le \left(\alpha/ \eta_{k} \right)^{1/k}$  where $\eta_k={K\choose k}$, then the above procedure has type-I error control at $\alpha$ under  weak negative dependence.
In particular, this holds for
$t  = (2\pi k)^{1/(2k)}    k \alpha^{1/k} /(eK)$,
or  simply
$t  =     k \alpha^{1/k} /(eK).$
\end{proposition}

Recall that the combination method of~\cite{ruger1978} for a fixed $k\in \mathcal K$ rejects the null if $p_{(k)}\le k\alpha/K$ (valid under arbitrary dependence). 
If $k=2$, then the procedure in Proposition \ref{prop:ruger}, rejecting when $p_{(2)}\le  (2\alpha)^{1/2}/K$  (valid under weak negative dependence), 
improves the threshold $ 2\alpha/K$  when $\alpha \le 1/2$.
For general $k$, the procedure is always an improvement over the threshold $ k\alpha/K$  when $\alpha\le e^{-2}$. 

\subsection{Negative Gaussian dependence}
\label{sec:gaussian}

 Theorem~\ref{th:neg} leads to upper bounds on the type-1 error of merging weakly negatively dependent p-values using the Simes test. Below, we discuss the situation of Gaussian-dependent p-values and e-values. 

For a $K\times K$ correlation matrix $\Sigma$,
denote by $\mathcal{G}_\Sigma$ the set of all 
  Gaussian-dependent  random vectors
  with Gaussian correlation $\Sigma$. 
If, $\mathbf X\in \mathcal{G}_\Sigma$ has standard uniform marginals, then its distribution   is called a {Gaussian copula} (see \cite{N06} for copulas).


 The following lemma gives a characterization of a few negative dependence concepts for Gaussian-dependent vectors.

\begin{lemma}\label{lem:gaussian}
For  Gaussian-dependent $\mathbf X\in \mathcal{G}_\Sigma$ with continuous marginals, the following are equivalent:
\begin{enumerate}[label=(\Roman*)]
    \item all off-diagonal entries of $\Sigma$ are non-positive; 
    \item $\mathbf X$ is negatively associated; 
    \item $\mathbf X$ is negative regression dependent; 
    \item $\mathbf X$ is negative orthant dependent; \item$\mathbf X$ is negative lower orthant dependent;  \item $\mathbf X$ is weakly negatively dependent. \end{enumerate}
\end{lemma}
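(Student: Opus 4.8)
The plan is to prove the lemma as a cycle of implications, exploiting the fact that we already have a complete chain of generic implications established earlier in Section~\ref{sec:defns-negdep}. Specifically, for \emph{any} random vector we know that negative association $\Rightarrow$ negative orthant dependence $\Rightarrow$ negative lower orthant dependence $\Rightarrow$ weak negative dependence, so the implications (ii) $\Rightarrow$ (iii) $\Rightarrow$ (iv) $\Rightarrow$ (v) are free and require no Gaussian structure. Likewise, the implication (i) $\Rightarrow$ (ii), i.e.\ that non-positive off-diagonal entries of $\Sigma$ force negative association, is precisely the cited fact (via \citet[Section 3.4]{JP83} and Slepian's lemma for the orthant statements). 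So the only genuinely new work is to close the cycle by proving that the \emph{weakest} condition (v) implies the \emph{strongest} structural condition (i): that is, \textbf{(v) $\Rightarrow$ (i)}.

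First I would reduce to the bivariate level. Since $\mathbf X = (f_1(Y_1),\dots,f_K(Y_k))$ with each $f_k$ monotone and the $Y_k$ jointly Gaussian, and since monotone coordinatewise transformations preserve the sign structure we care about, it suffices to show that if some off-diagonal entry $\Sigma_{ij}>0$, then weak negative dependence \eqref{eq:def-neg} fails for the pair $\{i,j\}$. Taking $A=\{i,j\}$ in \eqref{eq:def-neg}, the condition demands
\[
\p(X_i\le x,\, X_j\le x) \le \p(X_i\le x)\,\p(X_j\le x)
\]
for all $x$. Because the marginals are continuous, I can pass through the monotone $f_i,f_j$ to rewrite this event in terms of the underlying Gaussians $Y_i,Y_j$ (reducing to thresholds on each coordinate), so the question becomes whether a bivariate Gaussian with positive correlation can satisfy a lower-orthant-type product inequality at a common quantile. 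It cannot: for a standard bivariate normal $(Y_i,Y_j)$ with correlation $\rho=\Sigma_{ij}>0$, the joint lower-tail probability strictly exceeds the product of marginals (this is the elementary positive-correlation direction of Slepian's inequality, or can be seen directly from $\partial_\rho \p(Y_i\le t, Y_j\le t) = \phi_2(t,t;\rho) > 0$ where $\phi_2$ is the bivariate density). Choosing $x$ to be the appropriate common quantile then violates \eqref{eq:def-neg}, giving the contrapositive of (v) $\Rightarrow$ (i).

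Assembling the pieces, the logical skeleton is: cite (i) $\Rightarrow$ (ii) from \cite{JP83}; invoke the generic chain (ii) $\Rightarrow$ (iii) $\Rightarrow$ (iv) $\Rightarrow$ (v) already proven; and supply the new argument (v) $\Rightarrow$ (i) by the bivariate Gaussian computation above. This closes the cycle and establishes equivalence of all five statements.

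The main obstacle I anticipate is making the reduction in the second step fully rigorous when the $f_k$ are merely monotone (possibly decreasing, possibly non-strictly so). I must handle the \emph{concordant} nature of the transformations carefully: if all $f_k$ are increasing, the event $\{X_i \le x\}$ pulls back to a one-sided event $\{Y_i \le t_i\}$ for an appropriate $t_i$, and positivity of $\rho$ survives; if all are decreasing, the pullback flips to $\{Y_i \ge t_i\}$, but the sign of the resulting correlation and the direction of the product inequality flip together, so the conclusion is unchanged. The continuity of the marginals is what guarantees these thresholds exist and that I can choose $x$ attaining any prescribed marginal probability; this is exactly why the hypothesis "continuous marginals" appears in the lemma, and I would flag its role explicitly.
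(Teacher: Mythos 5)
Your proposal is correct and follows essentially the same route as the paper's proof: cite (i)$\Rightarrow$(ii) from \cite{JP83}, get (ii)$\Rightarrow$(iii)$\Rightarrow$(iv)$\Rightarrow$(v) from the generic chain of definitions, and close the cycle by showing (v)$\Rightarrow$(i) via the contrapositive bivariate Gaussian computation. The only (cosmetic) difference is that the paper first reduces to standard Gaussian marginals by monotone invariance and then evaluates at the common threshold $x=0$, obtaining $\p(X_i\le 0, X_j\le 0)>1/4=\p(X_i\le 0)\p(X_j\le 0)$ directly, whereas you argue at a general common quantile via Slepian/Plackett; both are the same underlying fact.
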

The implications (i)$\Rightarrow$(ii)$\Leftrightarrow$(iii)$\Leftrightarrow$(iv)$\Leftrightarrow$(v)$\Leftrightarrow$(vi) in Lemma~\ref{lem:gaussian} hold true regardless of whether $\mathbf X$ has continuous marginals, but (vi)$\Rightarrow$(i) requires this assumption; a  counterexample is $\mathbf X=(0,\dots,0)$. 

\begin{proof}
Since all statements are invariant under strictly increasing transforms, we
can safely treat $\mathbf X$ as having standard Gaussian marginals (and hence jointly Gaussian).  For Gaussian vectors,  
the implication (i)$\Rightarrow$(ii) is shown by \cite{JP83},
and (i)$\Rightarrow$(iii) is shown by \cite{block1985concept}. 
The implications (ii)$\Rightarrow$(iv)$\Rightarrow$(v)$\Rightarrow$(vi) can be checked by definition; see the diagram in the beginning of Section \ref{sec:defns-negdep}. To see that (vi) implies (i), suppose that an off-diagonal entry $\sigma_{ij}$ of $\Sigma$  is positive (for contradiction).
This implies  $\p(\{X_i\le 0\}\cap \{ X_j\le 0\})>1/4=\p(X_i\le 0)\p(X_j\le 0)$ by direct computation, which violates weak negative dependence. 
\end{proof}

Suppose that $\mathbf X=(X_1,\dots,X_K)$ is negatively Gaussian dependent. 
If a vector of p-values $\mathbf P$   is obtained 
via $\mathbf P=(f_1(X_1),\dots,f_K(X_K))$   for some decreasing functions (or increasing functions) $f_1,\dots,f_K$,
then $\mathbf P$ 
  is also negatively Gaussian dependent; the same applies to a vector of e-values.

 A vector of p-values $\mathbf P\in  \mathcal{G}_\Sigma \cap \mathcal U^K $ is PRD if and only if all entries of $\Sigma$ are non-negative (\cite{BY01}); 
 it is weakly negatively dependent  if and only if all entries of $\Sigma$ are non-positive (Lemma~\ref{lem:gaussian}). 
In the above two cases, the type-1 error of the Simes test applied to $\mathbf P$ is controlled by Proposition~\ref{prop:PRD} and Theorem~\ref{th:neg}. 
 For the intermediate case where some entries of $\Sigma$  are positive and some are negative,
 the type-1 error is much more complicated, and we only have an asymptotic result.
{In the result below, (i) and (iii) are known or follow from existing results, but we briefly explain their proof.}

 \begin{theorem}\label{th:gaussian}
  For Gaussian-dependent $\mathbf P\in \mathcal{G}_\Sigma\cap \mathcal U^K$, the following statements hold. 
  
  \begin{enumerate}[label=(\Roman*)]
  \item  If all  off-diagonal entries of $\Sigma$ are non-negative, then
 $$ 
\p( S_K(\mathbf P)\le\alpha)\le   \alpha 
\mbox{~~~ for all $\alpha \in (0,1)$}
 $$ 
  \item If all off-diagonal entries of $\Sigma$ are non-positive, then 
 $$ 
\p(S_K(\mathbf P)\le \alpha)\le \alpha+2\alpha^2+ 6\alpha^3 \mbox{~~~for $\alpha \in (0, 0.1].$} 
 $$ 
  \item It always holds that
 $$ 
\lim_{\alpha\downarrow 0} \frac{1}{\alpha} \p( S_K(\mathbf P)\le\alpha)\le  1.
 $$
  \end{enumerate}
   \end{theorem}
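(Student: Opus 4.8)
Parts (i) and (ii) require essentially no work beyond what the excerpt has already assembled. For (i), the assumption that all off-diagonal entries of $\Sigma$ are non-negative makes $\mathbf P$ a PRD vector of p-values, by the Gaussian characterization of PRD attributed to \cite{BY01} and recalled just above the theorem; Proposition~\ref{prop:PRD} then gives $\p(S_K(\mathbf P)\le\alpha)\le\alpha$ directly. For (ii), the assumption that all off-diagonal entries are non-positive makes $\mathbf P$ weakly negatively dependent by Lemma~\ref{lem:gaussian}, so the bound \eqref{eq:simple} of Theorem~\ref{th:neg} applies verbatim. Thus the real content of the theorem is part (iii), which I treat next.

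For (iii) I would work in the Gaussian representation. All quantities are invariant under strictly increasing coordinatewise transforms, and negating every coordinate leaves $\Sigma$ unchanged, so I may assume $P_k=\Phi(-Y_k)$ with $(Y_1,\dots,Y_K)\sim\mathrm N(0,\Sigma)$ standard Gaussian, whence $\{P_k\le t\}=\{Y_k\ge z_t\}$ with $z_t:=\Phi^{-1}(1-t)\to\infty$ as $t\downarrow0$. The plan is to reuse the Bonferroni step from the proof of Theorem~\ref{th:neg}: writing $c_k=k\alpha/K$ and $B_k=\{A\subseteq\mathcal K:|A|=k\}$,
\[
\p(S_K(\mathbf P)\le\alpha)\le\sum_{k=1}^K\sum_{A\in B_k}\p\Big(\bigcap_{j\in A}\{P_j\le c_k\}\Big),
\]
but now I do \emph{not} factorize the intersections, since there is no negative dependence to invoke. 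The $k=1$ layer contributes exactly $\sum_{j}c_1=K\cdot(\alpha/K)=\alpha$, and each $\binom{K}{k}$ is a fixed constant, so it suffices to show that every intersection probability with $|A|\ge2$ is $o(\alpha)$ as $\alpha\downarrow0$. Summing these finitely many $o(\alpha)$ terms then gives $\p(S_K(\mathbf P)\le\alpha)\le\alpha+o(\alpha)$, which is the claim.

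The key estimate is pairwise, since any intersection over $|A|\ge2$ is dominated by the two-coordinate probability for any pair $i\ne j$ in $A$. For a pair with $-1<\rho:=\Sigma_{ij}<1$ I would use the elementary inclusion $\{Y_i\ge z\}\cap\{Y_j\ge z\}\subseteq\{Y_i+Y_j\ge2z\}$ together with $Y_i+Y_j\sim\mathrm N(0,2(1+\rho))$, giving
\[
\p(P_i\le t,\,P_j\le t)\le\Phi\!\left(-z_t\sqrt{\tfrac{2}{1+\rho}}\right).
\]
Because $\sqrt{2/(1+\rho)}>1$ whenever $\rho<1$, the Gaussian tail ratio $\Phi(-cz_t)/\Phi(-z_t)\to0$ for $c>1$ forces this to be $o(t)$, and since $c_k\asymp\alpha$ this is $o(\alpha)$; the boundary value $\rho=-1$ makes the right-hand event empty and is trivial. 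This settles every $\Sigma$ whose off-diagonal entries are strictly below $1$.

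The one place the argument genuinely breaks — and the step I expect to be the main obstacle — is the degenerate case $\Sigma_{ij}=1$, where $Y_i=Y_j$ and $P_i=P_j$ almost surely, so the pairwise probability equals $t$ rather than $o(t)$ and plain Bonferroni over-counts (already for $K=2$, $\rho=1$ it returns $\tfrac32\alpha$ though the truth is $\alpha$). I would resolve this by grouping: partition $\mathcal K$ into equivalence classes under $i\sim j\iff\Sigma_{ij}=1$, so that within a class all p-values coincide (common value $Q_g$, class size $n_g$, with $\sum_g n_g=K$) while \emph{across} classes the correlation is $<1$. In place of Bonferroni I would decompose according to how many classes have $Q_g\le\alpha$. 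On the event that exactly one class $g$ is small, its $n_g$ tied copies occupy the smallest order statistics, so $\{S_K(\mathbf P)\le\alpha\}$ reduces exactly to $\{Q_g\le n_g\alpha/K\}$; these events are disjoint over $g$ and contribute at most $\sum_g n_g\alpha/K=\alpha$. The event that two or more classes are simultaneously $\le\alpha$ is controlled by the cross-class pairwise estimate above (correlation $<1$), hence is $o(\alpha)$. Together these yield $\p(S_K(\mathbf P)\le\alpha)\le\alpha+o(\alpha)$ in full generality, and dividing by $\alpha$ and letting $\alpha\downarrow0$ completes part (iii).
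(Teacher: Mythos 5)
Your parts (i) and (ii) coincide with the paper's own proof: (i) is Proposition~\ref{prop:PRD} plus the fact that Gaussian vectors with non-negative pairwise correlations are PRD, and (ii) is \eqref{eq:simple} combined with Lemma~\ref{lem:gaussian}. For part (iii), however, your argument is correct but follows a genuinely different route. The paper disposes of (iii) in two lines by citing \cite{CLTW22}: their Theorem 3 gives the pointwise bound $M_{-1,K}\le S_K$ for the harmonic average function $M_{-1,K}$, hence $\p(S_K(\mathbf P)\le\alpha)\le\p(M_{-1,K}(\mathbf P)\le\alpha)$, and their Theorem 2(ii) gives $\p(M_{-1,K}(\mathbf P)\le\alpha)=\alpha+o(\alpha)$ under Gaussian dependence. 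You instead prove the statement from scratch in the Gaussian representation: the first Bonferroni layer contributes exactly $\alpha$; any intersection involving a pair with correlation $\rho<1$ is at most $\Phi\bigl(-z_\alpha\sqrt{2/(1+\rho)}\bigr)=o(\alpha)$ by the inclusion $\{Y_i\ge z\}\cap\{Y_j\ge z\}\subseteq\{Y_i+Y_j\ge 2z\}$ and the Gaussian tail ratio; and the degenerate case $\Sigma_{ij}=1$, where plain Bonferroni genuinely over-counts, is repaired by grouping almost-surely equal p-values into classes, after which the ``exactly one small class'' events contribute at most $\sum_g n_g\alpha/K=\alpha$ (your order-statistics reduction to $\{Q_g\le n_g\alpha/K\}$ is correct) and the ``two or more small classes'' event is $o(\alpha)$ by the cross-class pairwise estimate. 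I verified the details (the variance $2(1+\rho)$ of $Y_i+Y_j$, the transitivity of the relation $\Sigma_{ij}=1$, the trivial case $\rho=-1$) and found no gaps. What each approach buys: the paper's proof is much shorter but imports nontrivial external results on harmonic-mean merging; yours is elementary and self-contained, makes the degenerate-correlation case explicit rather than hiding it inside a citation, and---since it only ever uses the bivariate Gaussian structure of pairs---it also directly establishes the strengthening recorded in the Remark after the theorem, namely that (iii) holds when merely every pair of components of $\mathbf P$ is bivariate Gaussian dependent, which the paper obtains by noting that this weaker condition suffices for Theorem 2(ii) of \cite{CLTW22}.
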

     The first statement above is well known~\citep{BY01}, but the other two are new.

  \begin{proof}   
For completeness, we point out that statement (i) follows from Proposition~\ref{prop:PRD} and the fact that a Gaussian vector with non-negative pair-wise correlations are PRD. 
Statement (ii) follows from \eqref{eq:simple} and Lemma~\ref{lem:gaussian}. 
To show statement (iii),
define the  harmonic average function $M_{-1,K}:(p_1,\dots,p_K)\mapsto ((p_1^{-1}+\dots+p_K^{-1})/K)^{-1}$. 
 Theorem 2 (ii) of \cite{CLTW22} implies 
$\p(M_{-1,K}(\mathbf P)\le \alpha)/\alpha \to 1$, 
 and Theorem 3 of \cite{CLTW22} gives 
$ M_{-1,K} \le S_K$. Combining these two statements, we get 
$\p(S_K(\mathbf P)\le \alpha) \le \p (M_{-1,K}(\mathbf P)\le \alpha) = \alpha + o(\alpha)$,
thus showing statement (iii).
\end{proof}

Theorem~\ref{th:gaussian} implies, in particular, that the Simes inequality \eqref{eq:simes}  \emph{almost} holds  for all Gaussian-dependent vectors of p-values and $\alpha$ small enough. 
It remains an open question to find a useful upper bound for  $ \p( S_K(\mathbf P)\le\alpha)$
over all Gaussian-dependent  $\mathbf P\in \mathcal U^K$  
for  practical values of $\alpha$ such as $0.05$ or $0.1$. 
A simple conjecture is that \eqref{eq:simple} or a similar inequality holds for all Gaussian-dependent $\mathbf P\in \mathcal U^K$, but a proof seems to be beyond our current techniques. 
{The behaviour of type-I error with varying correlation in the Gaussian setting has been discussed in, for e.g., \citet[Chapter 3]{cui2021handbook}.}

\begin{remark}
Part (iii) of Theorem~\ref{th:gaussian}
holds true under the  condition that 
each pair of components of  
 $\mathbf P$ has a bivariate Gaussian dependence, which is weaker than   Gaussian dependence of $\mathbf P$. The claim follows because this condition is sufficient for Theorem 2 (ii) of \cite{CLTW22}. 
\end{remark}
 
\begin{remark}\label{rem:Gaussian-mix}
Since the probability $\p(S_K(\mathbf P)\le \alpha)$ is linear in (distributional) mixtures, 
the result in Theorem~\ref{th:gaussian} (i) also applies to $\mathbf P$ being a mixture of positively Gaussian-dependent vectors of p-values. Similarly, (ii) applies to  mixtures of negatively Gaussian-dependent  vectors of p-values, and (iii) applies to  mixtures of any Gaussian-dependent ones.
\end{remark}

 \begin{remark}\label{rem:r1-1}
For multivariate Gaussian random vector $(Y_1,\dots,Y_n)$ with standard Gaussian marginals, the vector of the absolute values $(|Y_1|,\dots,|Y_K|)$ is generally not Gaussian-dependent, and hence Theorem \ref{th:gaussian} does not apply to the p-values $(P_1,\dots,P_K)$ computed from  $(|Y_1|,\dots,|Y_K|)$ through $P_k=2\Phi (-|Y_k|)$, $k\in \mathcal K$.
These p-values correspond to two-sided tests for the Gaussian distributions.
A well-known conjecture is that these p-values  satisfy the Simes inequality \eqref{eq:simes}; see e.g., \citet[Section 7]{finner2017simes} for a numerical experiment,
and also Remark \ref{rem:r1-2} for a similar conjecture in the context of FDR control. 

 \end{remark}
 
\subsection{Weighted merging of p-values} 
Let $\mathbf w=(w_1,\dots,w_K)\in \R_+^n$ denote prior weights on the p-values, where $w_1+\dots+w_K=K$; the simplex of such vectors is denoted by $\Delta_K$. These may themselves be obtained by e-values from independent experiments; see \cite{IWR22}, where   the requirement that they add up to $K$ may be dropped (but the terms $\alpha^2$ and $\alpha^3$ will need some correction). 
The weighted Simes function is 
   $$
   S^{\mathbf w}_K(p_1,\dots,p_K)= \bigwedge_{k=1}^K \frac{K}{k} q_{(k)},
   $$
   where $q_k=p_k/w_k$  for $k\in \mathcal K$ and $q_{(1)}\le\dots\le q_{(K)}$ are the order statistics of $q_1,\dots,q_K$.
   Clearly, if $w_1=\dots=w_K=1$, then $  S^{\mathbf w}_K=  S _K$.
    \begin{proposition}
For   weakly negatively dependent  p-values and any $\mathbf w\in \Delta_K$, the bounds in Theorem~\ref{th:neg} hold with $S^\mathbf w_K$ in place of $S_K$.
    \end{proposition}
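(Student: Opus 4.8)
The plan is to mirror the proof of Theorem~\ref{th:neg} almost verbatim, with two modifications: first pass from $\mathbf P$ to the rescaled vector $\mathbf Q=(P_1/w_1,\dots,P_K/w_K)$ so as to preserve the \emph{common-threshold} structure that drives that proof, and then replace the binomial coefficient $\binom{K}{k}$ appearing in \eqref{eq:HR} by an elementary symmetric polynomial in $\mathbf w$, which I bound back up to $\binom{K}{k}$.

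First I would set $Q_k=P_k/w_k$ and $c_k=k\alpha/K$ as before. Since $p\mapsto p/w_k$ is increasing for each $k$, and the same direction (increasing) is applied to every coordinate, the monotone-transformation closure property stated in Section~\ref{sec:defns-negdep} gives that $\mathbf Q$ is again weakly negatively dependent. The reason to work with $\mathbf Q$ rather than $\mathbf P$ is that $\{S^{\mathbf w}_K(\mathbf P)\le\alpha\}=\bigcup_{k=1}^K\{Q_{(k)}\le c_k\}$ and $\{Q_{(k)}\le c_k\}=\bigcup_{A\in B_k}\bigcap_{j\in A}\{Q_j\le c_k\}$, where the cutoff $c_k$ is now the \emph{same} across all $j\in A$; this is exactly the form to which \eqref{eq:def-neg} applies. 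Two applications of Bonferroni (to the outer union over $k$ and the inner union over $A$) followed by \eqref{eq:def-neg} for $\mathbf Q$ give $\p(S^{\mathbf w}_K(\mathbf P)\le\alpha)\le\sum_{k=1}^K\sum_{A\in B_k}\prod_{j\in A}\p(Q_j\le c_k)$. Since each $P_j$ is uniform, $\p(Q_j\le c_k)=\p(P_j\le w_j c_k)\le w_j c_k$, so the right-hand side is at most $\sum_{k=1}^K c_k^k\sum_{A\in B_k}\prod_{j\in A}w_j$.

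The one genuinely new ingredient, and the step I expect to be the main obstacle, is controlling the inner sum $\sum_{A\in B_k}\prod_{j\in A}w_j$, which is the $k$-th elementary symmetric polynomial $e_k(\mathbf w)$. The claim is $e_k(\mathbf w)\le\binom{K}{k}$, with equality at $\mathbf w=(1,\dots,1)$, and this is precisely where the normalization $w_1+\dots+w_K=K$ (that is, $\mathbf w\in\Delta_K$) is indispensable: by Maclaurin's inequality, $\bigl(e_k(\mathbf w)/\binom{K}{k}\bigr)^{1/k}\le e_1(\mathbf w)/K=1$, hence $e_k(\mathbf w)\le\binom{K}{k}$ for every $k$. (Equivalently, $e_k$ is symmetric and Schur-concave, so it is maximized over the simplex at its center.) Substituting this bound yields $\p(S^{\mathbf w}_K(\mathbf P)\le\alpha)\le\sum_{k=1}^K\binom{K}{k}c_k^k$, which is exactly the quantity in \eqref{eq:HR}; from there the remainder of the proof of Theorem~\ref{th:neg} (the estimate $\binom{n}{k}\le n^k/k!$ and Stirling's bound) carries over unchanged, so all three displayed bounds transfer to $S^{\mathbf w}_K$.

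One point deserves care. Had I worked with $\mathbf P$ directly, the intersections $\bigcap_{j\in A}\{P_j\le w_j c_k\}$ would carry \emph{unequal} cutoffs $w_j c_k$, and \eqref{eq:def-neg} --- a common-threshold statement --- would not apply. Routing through $\mathbf Q$ (or, alternatively, invoking negative lower orthant dependence \eqref{eq:def-nlod}, which permits unequal cutoffs directly) is what keeps the argument inside the weak-negative-dependence hypothesis. Beyond the Maclaurin estimate I do not anticipate any difficulty; everything else is a faithful transcription of the unweighted argument.
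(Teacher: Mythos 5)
Your proposal is correct and structurally identical to the paper's proof: both pass to $Q_j=P_j/w_j$, run the Bonferroni-plus-negative-dependence chain \eqref{eq:HR} with the common cutoffs $c_k$, reduce everything to the combinatorial inequality $\sum_{A\in B_k}\prod_{j\in A}w_j\le\binom{K}{k}$ (this is \eqref{eq:weighted-simes} in the paper), and then finish exactly as in Theorem~\ref{th:neg} via \eqref{eq:BinUbd}. The one place where you genuinely diverge is the proof of that inequality. You bound the elementary symmetric polynomial via Maclaurin's inequality (equivalently, Schur-concavity of $e_k$ on the simplex), which is classical, deterministic, and self-contained. The paper instead argues probabilistically: if $W_1,\dots,W_k$ are sampled without replacement from $w_1,\dots,w_K$, then $e_k(\mathbf w)/\binom{K}{k}=\E\left[\prod_{i=1}^k W_i\right]$, and since samples without replacement are negatively associated with $\E[W_i]=1$ (Section 3.2 of \cite{JP83}), this expectation is at most $1$. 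The two routes give exactly the same bound; yours needs no external reference, while the paper's has the aesthetic appeal of using negative association itself to prove a result about negative dependence.

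One remark on the step you flagged: your worry about unequal cutoffs is well placed, and your resolution is exactly as rigorous as the paper's --- no more, no less. The paper silently substitutes $P_1/w_1,\dots,P_K/w_K$ into \eqref{eq:HR}, which is the same move as your appeal to the closure property of Section~\ref{sec:defns-negdep}. But that closure claim is delicate for weak negative dependence: \eqref{eq:def-neg} is a common-threshold condition, and applying \emph{different} increasing maps $p\mapsto p/w_k$ to different coordinates turns a common threshold for $\mathbf Q$ into unequal thresholds $w_jc_k$ for $\mathbf P$, which \eqref{eq:def-neg} does not control (a weakly negatively dependent vector need not be negatively lower orthant dependent, and rescaling such a vector by unequal constants can create a violation of \eqref{eq:def-neg} at a common threshold). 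So the airtight reading of both your argument and the paper's is your stated fallback: assume negative lower orthant dependence \eqref{eq:def-nlod}, which licenses the unequal cutoffs directly; under literal weak negative dependence of $\mathbf P$ alone, both proofs share the same inherited subtlety.
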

    \begin{proof}
    It suffices to show that 
     $$ \p(S^\mathbf w_K(\mathbf P)\le\alpha) 
      \le   \sum_{k=1}^K\binom{K}{k} c_k^k$$
      holds, and the remaining steps follow as in the proof of Theorem~\ref{th:neg}.
       Using \eqref{eq:HR} with $P_1,\dots,P_K$ replaced by $P_1/w_1,\dots,P_K/w_K$, we only need to check the inequality in 
     $$  \sum_{ A\in B_k }  \prod_{j\in A}\p(P_{j} \leq w_j c_k    ) =\sum_{ A\in B_k }  \prod_{j\in A} (w_j  c_k) \le     \binom{K}{k} c_k^k,$$
     which holds if  
     \begin{align}\label{eq:weighted-simes}
    \sum_{ A\in B_k }  \prod_{j\in A}  w_j \le { \binom{K}{k} }. 
    \end{align}
    Below we show \eqref{eq:weighted-simes}.  
    Let $W_1,\dots,W_k$ be random samples from $w_1,\dots,w_K$ without replacement.
    By definition, 
we have  $\E[W_1]=\dots=\E[W_k]=1$ and 
$$
     \frac1 { \binom{K}{k} }\sum_{ A\in B_k }  \prod_{j\in A}  w_j
     =\E\left[\prod_{i=1}^k W_i\right ].
     $$
     Since $W_1,\dots,W_k$ are negatively associated (see Section 3.2 of \cite{JP83}), we have 
     $$ 
    \E\left[\prod_{i=1}^k W_i\right ]\le   \prod_{i=1}^k  \E\left[W_i\right ] =1,
     $$
     and hence \eqref{eq:weighted-simes} holds. This is sufficient to obtain the bounds in Theorem~\ref{th:neg}.
    \end{proof}

\subsection{Iterated applications of negative dependence}

A natural question is the following: if $\mathbf{P}$ is negatively dependent, and $A,B$ are two non-overlapping subsets of size $K_1,K_2$, then is it the case that $S_{K_1}(\mathbf P_A)$ and $S_{K_2}(\mathbf P_B)$ are also negatively dependent? 
(In what follows, we suppress the subscripts 
 $K_1$ and $K_2$ for readability.)
We cannot settle this question for all definitions of negative dependence, but we can prove the following.

\begin{proposition}\label{prop:simes-NA-to-NOD}
    If $\mathbf{P}$ is negatively associated, and $\{A_k\}_{k=1,\dots,\ell}$ are non-overlapping subsets of $\mathcal K$, then $(S(\mathbf P_{A_1}),\dots, S(\mathbf P_{A_\ell}))$ is negative  orthant dependent.
    The same result holds for any monotone p-value combination rule (such as Fisher's, Stouffer's or Bonferroni, median, average, etc.). 
\end{proposition}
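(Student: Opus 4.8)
The plan is to pass to the functional characterization of negative upper orthant dependence and then feed into it the block-wise product inequality that negative association already hands us. Set $Y_k := S(\mathbf P_{A_k})$ for $k=1,\dots,\ell$, so that the goal is to verify \eqref{eq:def-nuod} for the vector $(Y_1,\dots,Y_\ell)$. By the equivalence between \eqref{eq:def-nuod} and \eqref{eq:nuod-f}, it suffices to establish
\[
\E\left[\prod_{k=1}^\ell \psi_k(Y_k)\right]\le \prod_{k=1}^\ell \E[\psi_k(Y_k)]
\]
for every family of nonnegative increasing functions $\psi_1,\dots,\psi_\ell$.

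The first substantive step is to record that the combination rule $S$ is coordinatewise increasing in its arguments. For the Simes function this holds because each order statistic $(p_1,\dots,p_K)\mapsto p_{(j)}$ is coordinatewise increasing and a minimum of coordinatewise increasing maps is again coordinatewise increasing; for Fisher's, Stouffer's, Bonferroni, the median and the average, the same monotonicity is immediate (indeed, ``monotone combination rule'' should be read as exactly this property). Consequently, for each $k$ the composition $\phi_k := \psi_k\circ S$ is a nonnegative increasing function of the block $\mathbf P_{A_k}$, since $\psi_k$ is nonnegative and increasing while $S$ is increasing.

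Since the index sets $A_1,\dots,A_\ell$ are non-overlapping and $\mathbf P$ is negatively associated, I would now apply the implication \eqref{eq:na-implication} to the functions $\phi_1,\dots,\phi_\ell$. This yields exactly
\[
\E\left[\prod_{k=1}^\ell \psi_k(Y_k)\right]=\E\left[\prod_{k=1}^\ell \phi_k(\mathbf P_{A_k})\right]\le \prod_{k=1}^\ell \E[\phi_k(\mathbf P_{A_k})]=\prod_{k=1}^\ell \E[\psi_k(Y_k)],
\]
which is the desired inequality and completes the proof for a general monotone rule $S$. The argument is genuinely short, so I do not expect a serious computational obstacle; the only points that need care are (i) confirming coordinatewise monotonicity of $S$ (routine for each named rule, and the defining assumption for monotone rules in general) and (ii) ensuring the orientation matches---because $S$ is increasing and \eqref{eq:na-implication} is phrased for nonnegative increasing test functions, the conclusion comes out as \emph{upper} orthant dependence, as stated. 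I would add the remark that the lower orthant version follows by running the identical argument on $-\mathbf P$, which remains negatively associated under the concordant decreasing transform, so in fact $(Y_1,\dots,Y_\ell)$ is negative orthant dependent; the proposition records only the upper orthant conclusion.
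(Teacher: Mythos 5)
Your proof is correct and takes essentially the same approach as the paper: both hinge on the coordinatewise monotonicity of $S$ together with the implication \eqref{eq:na-implication} of negative association applied to the non-overlapping blocks. The paper simply chooses the test functions to be the indicators $\id_{\{S(\mathbf P_{A_k}) > s_k\}}$, which verifies \eqref{eq:def-nuod} directly, whereas you route through the equivalent functional characterization \eqref{eq:nuod-f} with general nonnegative increasing $\psi_k$ --- a harmless extra layer, not a different argument.
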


\begin{proof}
Recall the implication of negative 
association~\eqref{eq:na-implication}. 
For arbitrary constants $s_1,\dots,s_\ell\ge 0$, choose the coordinatewise increasing nonnegative functions $\phi_k$ as $\id_{\{S(\mathbf P_{A_k}) > s_k\}}$ to yield
\[
\p( S(\mathbf P_{A_1}) > s_1, \dots, S(\mathbf P_{A_\ell}) > s_\ell) \leq \prod_{k=1}^\ell  \p(S(\mathbf P_{A_k}) > s_k),
\]
and this shows negative upper orthant dependence of $(S(\mathbf P_{A_1}),\dots, S(\mathbf P_{A_\ell}))$. 
To obtain negative lower orthant dependence, it suffices to note that 
\eqref{eq:def-na} holds for componentwise decreasing $f,g$, 
and thus
\eqref{eq:na-implication} also holds for non-negative componentwise decreasing functions $\phi_k$ chosen as $\id_{\{S(\mathbf P_{A_k})\le s_k\}}$. 
\end{proof}

The above proposition proves useful in group-level FDR control, as we shall see later. For now, we describe an implication for global null testing with grouped hypotheses in  the following corollary.  
\begin{corollary}\label{cor:simes-of-simes}
    If $\mathbf{P}$ is negatively associated, and $A_1,\dots,A_\ell$ are non-overlapping subsets of $\mathcal K$, 
    then 
    \[
    \p\Big( S( S(\mathbf P_{A_1} ),\dots, S(\mathbf P_{A_\ell})) \leq \alpha \Big) \leq 1.52 \alpha \text{ ~ ~ for all } \alpha \in [0, 0.083),
    \]
    and also
    \[
     \p\Big( S( S(\mathbf P_{A_1} ),\dots, S(\mathbf P_{A_\ell})) \leq \alpha \Big) \leq (3.4 \wedge \ell_K )^2 \alpha \text{ ~ ~ for all } \alpha \in [0,1],
    \]
    meaning that $(3.4 \wedge \ell_K )^2 S( S(\mathbf P_{A_1} ),\dots, S(\mathbf P_{A_\ell}))$ is a valid p-value. 
\end{corollary}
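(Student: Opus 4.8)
\textbf{Proof proposal for Corollary~\ref{cor:simes-of-simes}.}

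The plan is to combine Proposition~\ref{prop:simes-NA-to-NOD} with Theorem~\ref{th:neg} in a two-level fashion, treating the inner and outer Simes applications separately. First I would observe that, since $\mathbf P$ is negatively associated and the $A_1,\dots,A_\ell$ are non-overlapping, Proposition~\ref{prop:simes-NA-to-NOD} tells us that the vector of inner Simes statistics $\mathbf S := (S(\mathbf P_{A_1}),\dots,S(\mathbf P_{A_\ell}))$ is negative upper orthant dependent. The subtlety is that the outer Simes function $S_\ell$ is built around lower-tail events $\{S(\mathbf P_{A_k}) \le c_k\}$, whereas Proposition~\ref{prop:simes-NA-to-NOD} gives control of upper-tail events. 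The clean way to handle this is to pass to the complementary variables: recall from the remark after Theorem~\ref{th:neg} that the additive bound only requires weak negative dependence to hold on lower-tail events $\{P_k \le p\}$ for $p \in [0,\alpha]$, and that negative upper orthant dependence is exactly the version of weak negative dependence obtained by replacing $\{X_k \le x\}$ with $\{X_k > x\}$ (as noted in Section~\ref{sec:defns-negdep}). So I would apply the upper-orthant analogue of Theorem~\ref{th:neg} to $\mathbf S$ directly.

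The key point is that each coordinate $S(\mathbf P_{A_k})$ is itself (up to the correction factor) a p-variable. By Corollary~\ref{cor:mult-error}, $3.4\, S(\mathbf P_{A_k})$ is a p-value, and since $3.4\, S(\mathbf P_{A_k}) = S(3.4\, \mathbf P_{A_k})$ by positive homogeneity of the Simes function, the inner combination contributes one factor of $3.4 \wedge \ell_K$. I would then run Theorem~\ref{th:neg} on the outer layer: treating $\mathbf S$ as negative (upper) orthant dependent p-values, the Simes test applied to them is controlled by the additive bound of Theorem~\ref{th:neg}, contributing a second factor of $3.4 \wedge \ell_K$. Composing the two layers multiplicatively and tracking the constants yields the $(3.4 \wedge \ell_K)^2$ bound valid for all $\alpha \in [0,1]$, and the sharper $1.52\alpha$ bound on $[0,0.083)$ by pushing both layers through the $1.26$ multiplier of~\eqref{eq:multiplicative-simes-1} (note $1.26^2 \approx 1.588$, so I would instead use one exact layer and one $1.26$ layer, or verify the range $[0,0.083)$ against~\eqref{eq:simes-neg} directly). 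The PRD case is immediate from Proposition~\ref{prop:PRD}: if $\mathbf P$ is PRD then each $S(\mathbf P_{A_k})$ is an honest p-value and, since the standard Simes inequality holds for the outer layer as well under the appropriate dependence, $S(S(\mathbf P_{A_1}),\dots,S(\mathbf P_{A_\ell}))$ is a p-value with no inflation.

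The main obstacle I anticipate is justifying that the outer Simes application is legitimately controlled. Theorem~\ref{th:neg} is stated for uniform marginals $\mathbf S \in \mathcal U^\ell$, but the inner Simes statistics $S(\mathbf P_{A_k})$ are only \emph{stochastically larger} than uniform (they are p-variables, i.e.\ conservative), not exactly uniform. I would argue that this only helps: replacing each $S(\mathbf P_{A_k})$ by a uniform variable that is pointwise smaller can only increase the rejection probability, and the bound of Theorem~\ref{th:neg} depends on the marginals only through the inequalities $\p(S(\mathbf P_{A_k}) \le c) \le c$ (after the $3.4$ rescaling), which is exactly the content of Corollary~\ref{cor:mult-error}. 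The care needed is to confirm that the negative-orthant-dependence structure from Proposition~\ref{prop:simes-NA-to-NOD} survives the monotone rescaling by $3.4$ (it does, since all notions of negative dependence are preserved under concordant coordinatewise monotone transformations, as noted in the closure properties), and to verify the explicit numerical ranges $[0,0.083)$ and the constant $1.52$ by direct substitution into~\eqref{eq:simes-neg}.
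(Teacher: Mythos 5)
Your overall two-layer strategy is the same as the paper's (the constants indeed come from composing the multiplicative bounds: from Table~\ref{tab:simes-neg} one layer at multiplier $1.205$, valid up to $0.083$, and one at $1.26$, valid up to $0.1$, with $1.205\times 0.083\approx 0.1$ and $1.205\times 1.26\approx 1.518\le 1.52$; your observation that superuniform marginals suffice in the proof of Theorem~\ref{th:neg} is also correct and needed). However, your resolution of the upper-tail/lower-tail mismatch is a genuine gap. The outer Simes rejection event $\{S(S(\mathbf P_{A_1}),\dots,S(\mathbf P_{A_\ell}))\le\alpha\}$ is a union of intersections of \emph{lower-tail} events $\{S(\mathbf P_{A_k})\le c_j\}$, so the argument of Theorem~\ref{th:neg} requires the lower-tail product inequality \eqref{eq:def-neg} for the vector $\mathbf S$ of inner Simes statistics. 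Negative upper orthant dependence (the stated conclusion of Proposition~\ref{prop:simes-NA-to-NOD}) gives factorization only for intersections of upper-tail events, and there is no ``upper-orthant analogue of Theorem~\ref{th:neg}'' that bounds a lower-tail Simes probability: passing to complements converts the intersections of lower-tail events into unions of upper-tail events, for which the product inequality goes in the useless direction. The correct fix is that negative association of $\mathbf P$ yields negative \emph{lower} orthant dependence of $\mathbf S$ directly: repeat the proof of Proposition~\ref{prop:simes-NA-to-NOD} with $\phi_k=\id_{\{S(\mathbf P_{A_k})\le s_k\}}$, a nonnegative \emph{decreasing} function of $\mathbf P_{A_k}$, noting that \eqref{eq:na-implication} also holds for concordant decreasing functions (apply it to $-\mathbf P$, which is negatively associated by the closure under concordant monotone transformations). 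This is what the paper implicitly relies on when Proposition~\ref{prop:BH-Simes} describes the Simes ``p-values'' as negatively orthant dependent.

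The second gap is the PRD case. You assert that each $S(\mathbf P_{A_k})$ is an honest p-value (true, by Proposition~\ref{prop:PRD}) and that ``the standard Simes inequality holds for the outer layer as well under the appropriate dependence'' --- but that appropriate dependence is precisely what is unavailable: as the paper emphasizes, $S(\mathbf P_{A_1}),\dots,S(\mathbf P_{A_\ell})$ are \emph{not} known to be PRD even when $\mathbf P$ is, so Proposition~\ref{prop:PRD} cannot be invoked for the outer layer. The paper does not prove this part; it cites \cite[Lemma 2(d)]{ramdas2019unified}, which establishes the Simes-of-Simes inequality under PRD by a different argument. Without that external result (or a new proof of it), your conclusion for the PRD case is unjustified.
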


In the first inequality in Corollary~\ref{cor:simes-of-simes},
the values $0.083$ and $1.52$ are computed from Table~\ref{tab:simes-neg} by applying \eqref{eq:multiplicative-simes-1} twice. 
The second inequality is due to the fact that $3.4S(\mathbf P)$ is a p-value by Theorem~\ref{th:neg} under weak negative dependence.

  In contrast to Corollary~\ref{cor:simes-of-simes}, if $\mathbf{P}$ is positively regression dependent (PRD), then
    \[
    \p\Big(S( S(\mathbf P_{A_1} ),\dots, S(\mathbf P_{A_\ell})) \leq \alpha \Big)   \leq  \alpha.
    \]
This inequality was proved by~\cite[Lemma 2(d)]{ramdas2019unified}. Surprisingly, it holds despite the fact that $S(\mathbf P_{A_1}),\dots, S(\mathbf P_{A_\ell})$ are not known to themselves be PRD (even though $\mathbf{P}$ is); in fact, the claim under PRD even holds for overlapping groups. It is likely that under certain types of mixed dependence (such as positive dependence within groups but negative dependence across groups, or vice versa), intermediate bounds can be derived.

\section{Merging negatively dependent e-values}\label{sec:merge-e}

E-values (\cite{VW21}) are an alternative to p-values as a measure of evidence and significance. 
We make a brief but important observation on negatively associated e-values.
An e-variable (also called an e-value, with slight abuse of terminology) for 
testing a hypothesis $H$ 
is a random variable $E\ge 0$ with $\E^Q[E]\le 1$ for each probability measure $Q\in H$. Further, recall that an e-value may be obtained by \emph{calibration} from a p-value $P$, i.e., $E=\phi(P)$ for some
 \emph{calibrator} $\phi$, which is a nonnegative decreasing function $\phi$ satisfying $\int_0^1 \phi (t) \d t\le 1$ (typically with an equal sign); see \cite{shafer2011test} and \cite{VW21}.

\begin{theorem}\label{prop:nuod-e}
If e-values $E_1,\dots,E_K$ are negatively upper orthant dependent, then $\prod_{i=1}^k E_i$ is also an e-value for each $k\in\mathcal K$. More generally, $E(\boldsymbol \lambda) := \prod_{i=1}^K (1-\lambda_i + \lambda_iE_i)$ is an e-value for any constant vector $\boldsymbol \lambda:=(\lambda_1,\dots,\lambda_K)\in [0,1]^K$. 
In particular, if the e-values are obtained by calibrating negative lower orthant dependent p-values, then they are negatively upper orthant dependent.
\end{theorem}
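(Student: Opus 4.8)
The plan is to reduce all three assertions to the product-form characterizations of orthant dependence: \eqref{eq:nuod-f} for negative upper orthant dependence and \eqref{eq:nlod-f} for negative lower orthant dependence. The guiding principle is that the defining e-value inequality $\E[E_i]\le 1$ interacts cleanly with these product bounds whenever the test functions applied to each coordinate are nonnegative and increasing.

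I would first establish the general statement about $E(\boldsymbol\lambda)$, since the claim that $\prod_{i=1}^k E_i$ is an e-value is simply the special case $\lambda_1=\dots=\lambda_k=1$ and $\lambda_{k+1}=\dots=\lambda_K=0$ (the factors with $\lambda_i=0$ equal the constant $1$). For each $i$, put $\phi_i(x)=1-\lambda_i+\lambda_i x$; since $\lambda_i\in[0,1]$, this map is nonnegative on $[0,\infty)$ and increasing, hence an admissible test function in \eqref{eq:nuod-f}. Applying \eqref{eq:nuod-f} to the NUOD vector $(E_1,\dots,E_K)$ yields $\E[E(\boldsymbol\lambda)]=\E[\prod_{i=1}^K\phi_i(E_i)]\le\prod_{i=1}^K\E[\phi_i(E_i)]=\prod_{i=1}^K(1-\lambda_i+\lambda_i\E[E_i])$. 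Because each $E_i$ is an e-value, $\E[E_i]\le 1$, so every factor is at most $1-\lambda_i+\lambda_i=1$ and the whole product is at most $1$. This proves that $E(\boldsymbol\lambda)$, and in particular $\prod_{i=1}^k E_i$, is an e-value.

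For the final (``in particular'') claim, I would start from calibrated e-values $E_i=\phi_i(P_i)$, where each calibrator $\phi_i$ is nonnegative and decreasing and the vector $\mathbf P$ is NLOD. To verify NUOD of $(E_1,\dots,E_K)$ through \eqref{eq:nuod-f}, fix arbitrary nonnegative increasing $\psi_1,\dots,\psi_K$ and set $g_i:=\psi_i\circ\phi_i$. Each $g_i$ is nonnegative (as $\psi_i\ge 0$) and decreasing (an increasing function composed with a decreasing one is decreasing), so $g_1,\dots,g_K$ are admissible in the NLOD characterization \eqref{eq:nlod-f}. That inequality gives $\E[\prod_{i=1}^K\psi_i(E_i)]=\E[\prod_{i=1}^K g_i(P_i)]\le\prod_{i=1}^K\E[g_i(P_i)]=\prod_{i=1}^K\E[\psi_i(E_i)]$, which is exactly \eqref{eq:nuod-f} for $\mathbf E$. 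Hence the calibrated e-values are NUOD.

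I do not anticipate a genuine obstacle: each step is a matter of placing the right test function in the correct monotonicity class. The only points demanding care are (a) confirming that the affine maps $x\mapsto 1-\lambda_i+\lambda_i x$ are nonnegative and increasing precisely because $\lambda_i\in[0,1]$, and (b) tracking the sign reversal when an increasing $\psi_i$ is composed with a decreasing calibrator $\phi_i$, since this is the mechanism that converts the NLOD hypothesis on $\mathbf P$ into the NUOD conclusion on $\mathbf E$.
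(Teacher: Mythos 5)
Your proof is correct and takes essentially the same route as the paper: the paper records this theorem with the remark that its proof ``is simply a direct consequence of \eqref{eq:nuod-f}'', and your argument spells out precisely that reduction, including the composition step $g_i=\psi_i\circ\phi_i$ that turns the NLOD hypothesis on $\mathbf P$ into the NUOD conclusion on $\mathbf E$ via \eqref{eq:nlod-f}. The only cosmetic point is that the affine maps $x\mapsto 1-\lambda_i+\lambda_i x$ should formally be replaced by $x\mapsto 1-\lambda_i+\lambda_i(x\vee 0)$ so that they are nonnegative and increasing on all of $\R$, which is harmless since the $E_i$ are nonnegative by definition.
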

The above proposition is recorded for ease of reference, but its proof is simply a direct consequence of \eqref{eq:nuod-f}. 
The condition of negative upper orthant dependence in Theorem~\ref{prop:nuod-e} is weaker than negative orthant dependence or negative association. 
Thus if $\mathbf P$ is Gaussian dependent,  and all  off-diagonal entries of $\Sigma$ are non-positive, then
 $
 E := \prod_{k=1}^K  \phi_k(P_k)  
 $ 
 is an e-value 
 for any calibrators $\phi_1,\dots,\phi_K$.
 
 Products are not the only way to combine negatively dependent e-values. The next proposition lays out certain admissible combinations.


\begin{corollary}\label{cor:U}
For negatively upper orthant dependent e-values $E_1,\dots,E_K$, 
 convex combinations of  $$\prod_{k\in A} E_k,\mbox{~where $A\subseteq \mathcal K$,}$$
are also valid e-values (here the product is $1$ if $A=\varnothing$). This family includes U-statistics of $E_1,\dots,E_K$. Further, such convex combinations, treated as functions from $[0,\infty)^K\to [0,\infty)$, are admissible merging functions for negative orthant dependent e-values.
\end{corollary}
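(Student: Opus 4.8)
The plan is to treat the three claims separately, with nearly all the work going into admissibility.

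For validity I would first check that each product $\prod_{k\in A}E_k$ is itself an e-value. Applying the characterization \eqref{eq:nuod-f} with $\phi_k(x)=x$ for $k\in A$ and $\phi_k\equiv 1$ for $k\notin A$ (each nonnegative and increasing), negative upper orthant dependence gives $\E\left[\prod_{k\in A}E_k\right]\le\prod_{k\in A}\E[E_k]\le 1$, and nonnegativity is immediate; so $\prod_{k\in A}E_k$ is an e-value (with the empty product equal to $1$). Since a convex combination of e-values is again an e-value by linearity of expectation, every $\sum_A w_A\prod_{k\in A}E_k$ with $w_A\ge 0$ and $\sum_A w_A=1$ is an e-value. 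The U-statistic claim is then the special case of uniform weights on the size-$m$ subsets, namely $\binom{K}{m}^{-1}\sum_{|A|=m}\prod_{k\in A}E_k$.

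For admissibility I would work in the negatively orthant dependent (NOD) class, noting that the function $F(\mathbf x)=\sum_A w_A\prod_{k\in A}x_k$ is a valid merging function there because NOD implies negative upper orthant dependence, so the previous paragraph applies. Recall that $F$ is admissible if no merging function $G$ (valid for NOD) satisfies $G\ge F$ pointwise on $[0,\infty)^K$ with $G\ne F$. So I would fix such a $G$ and aim to force $G=F$ everywhere. The engine of the argument is that \emph{independent} e-values are the extremal case for NOD: the inequality in \eqref{eq:nuod-f} becomes an equality, so the bound that makes $F$ valid is attained exactly.

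Concretely, for an arbitrary target $\mathbf x=(x_1,\dots,x_K)\in[0,\infty)^K$ I would construct independent e-values $E_1,\dots,E_K$, each of mean exactly $1$ and each charging the coordinate $x_k$: for $x_k>0$ take a two-point law on $\{x_k,y_k\}$ with $px_k+(1-p)y_k=1$ for small $p$, and for $x_k=0$ take $E_k\in\{0,1/p\}$ with $\p(E_k=1/p)=p$. The resulting vector $\mathbf E$ is independent, hence NOD, and has a positive-probability atom at $\mathbf x$. By independence and $\E[E_k]=1$ we get $\E[F(\mathbf E)]=\sum_A w_A\prod_{k\in A}\E[E_k]=\sum_A w_A=1$, whereas validity of $G$ gives $\E[G(\mathbf E)]\le 1$ and $G\ge F$ gives $\E[G(\mathbf E)]\ge\E[F(\mathbf E)]=1$. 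Hence $\E[(G-F)(\mathbf E)]=0$ with $G-F\ge 0$, so $(G-F)(\mathbf E)=0$ almost surely, and evaluating at the atom yields $G(\mathbf x)=F(\mathbf x)$. Letting $\mathbf x$ range over all of $[0,\infty)^K$ (the atom-at-zero construction reaches the boundary faces) gives $G=F$, so $F$ is admissible. The main obstacle is precisely this admissibility step: one must recognize that independent mean-one e-values are the tight configurations within the NOD class, and then exhibit, for every point of the domain including its boundary, a product law of such e-values that charges that point, so that the identity $\E[F(\mathbf E)]=1$ pins $G$ down pointwise. The validity and U-statistic parts are routine once \eqref{eq:nuod-f} is invoked.
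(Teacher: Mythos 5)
Your proof is correct, and its validity and U-statistic parts coincide with the paper's: products $\prod_{k\in A}E_k$ are e-values by \eqref{eq:nuod-f} (this is exactly Theorem~\ref{prop:nuod-e}), and convex combinations of arbitrarily dependent e-values are e-values by linearity. Where you genuinely diverge is the admissibility claim. The paper disposes of it in one line by citation: these functions are admissible within the \emph{larger} class of merging functions valid for independent e-values (\cite{VW21}), and since every independent vector is negatively orthant dependent, any NOD-valid competitor is in particular independence-valid, so no such competitor can dominate. You instead prove admissibility from scratch: for each point $\mathbf x\in[0,\infty)^K$ you build independent mean-one e-values with an atom at $\mathbf x$, observe that $\E[F(\mathbf E)]=1$ exactly, and use validity of the dominating $G$ (applicable precisely because independent vectors lie in the NOD class) to force $\E[(G-F)(\mathbf E)]=0$ and hence $G(\mathbf x)=F(\mathbf x)$. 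Both arguments pivot on the same fact, namely that independence is the extremal configuration inside NOD, but yours is self-contained and actually re-derives the cited result of \cite{VW21}: since your competitors $G$ are only assumed valid for NOD vectors while your test distributions are independent, the identical argument shows admissibility among functions valid merely for independent e-values. The price is the explicit two-point constructions (including the boundary case $x_k=0$ and the choice of $p$ small enough that the second support point stays nonnegative); the paper's route buys brevity by delegating exactly that construction to the literature.
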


 
The validity follows because averages of arbitrarily dependent e-values are always e-values.
The admissibility follows because these merging functions are admissible within the larger class of merging functions for independent e-values (see \cite{VW21}).

For independent e-values,
 \cite{vovk2020true} observed from simulations that the U-statistic
\begin{align}
U_2:=\frac{2}{K(K-1)} \sum_{1\le k<j\le K} E_k E_j 
\label{eq:U2}
\end{align}
performs quite well in some numerical experiments. Similarly, 
\begin{align}
U_3:=\frac{6}{K(K-1)(K-2)} \sum_{1\le k<j<\ell \le K} E_k E_jE_\ell 
\label{eq:U3}
\end{align}
can be useful in different situations. 
Since $U_2$ and $U_3$ are both valid e-values under negative upper orthant dependence, we will use these e-values in our simulation  examples below.  


Note that the Simes combination for e-values, given by 
\[
   S_K(e_1,\dots,e_K)= \bigvee_{k=1}^K \frac{k}{K} e_{[k]}, \mbox{~where $e_{[k]} $ is the $k$-th largest order statistic of $e_1,\dots,e_K$,}
\] 
does result in a valid e-value under arbitrary dependence, but it is uninteresting because it is dominated by the average of the e-values, which is also valid under arbitrary dependence as mentioned above. Thus we only discuss Simes in the context of p-values in this paper.

We end this subsection by presenting an important corollary of Theorem~\ref{prop:nuod-e} that pertains to the construction of particular e-values that are commonly encountered in nonparametric concentration inequalities. To set things up, following~\cite{boucheron2013concentration}, we call a mean-zero random variable $X$ as $v$-sub-$\psi$, if the following condition holds: for every $\lambda \in \mathrm{Domain}(\psi)$, $\mathbb{E}[e^{\lambda X}] \leq e^{\psi(\lambda)v}$, which is simply a bound on its moment generating function. If $X$ is not mean zero, then it is called $v$-sub-$\psi$ if the aforementioned condition is satisfied by $X - \mathbb{E}[X]$. In particular, if $\psi(\lambda)=\lambda^2/2$, then $X$ is called $v$-subGaussian. 
In what follows, we use $n$ instead $K$ for the number of random variables involved, as $n$ here often corresponds to the number of observations instead of the number of tests.

\begin{corollary}[Chernoff e-variables]\label{cor:chernoff}
Suppose $X_1,\dots,X_n$ are negatively associated, and that each $X_i$ is $v_i$-sub-$\psi_i$. Then, denoting by $\mu_i:= \mathbb{E}[X_i]$, we have that $\exp( \sum_{i=1}^n \lambda_i (X_i-\mu_i) - \sum_{i=1}^n \psi_i(\lambda_i) v_i)$ is an e-value for any positive constants $\lambda_1,\dots,\lambda_n$ in the domains of $\psi_1,\dots,\psi_n$, respectively.
\end{corollary}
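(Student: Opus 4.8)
The plan is to recognize the claimed random variable as a product of per-coordinate Chernoff factors and then invoke Theorem~\ref{prop:nuod-e}. Writing $E_i := \exp(\lambda_i(X_i-\mu_i) - \psi_i(\lambda_i)v_i)$, the target equals $\prod_{i=1}^n E_i$, so it suffices to check (a) each $E_i$ is a marginal e-value and (b) the vector $(E_1,\dots,E_n)$ is negative upper orthant dependent; Theorem~\ref{prop:nuod-e} then yields that the product is an e-value.

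Step (a) is immediate from the definition of $v_i$-sub-$\psi_i$: $\E[E_i] = \E[e^{\lambda_i(X_i-\mu_i)}]\,e^{-\psi_i(\lambda_i)v_i} \le e^{\psi_i(\lambda_i)v_i}e^{-\psi_i(\lambda_i)v_i}=1$. For step (b) I would note that $E_i = \phi_i(X_i)$ for the scalar map $\phi_i(x)=\exp(\lambda_i(x-\mu_i)-\psi_i(\lambda_i)v_i)$, and that $\mathbf X$, being negatively associated, is in particular negative orthant dependent and hence negative upper orthant dependent. Since negative upper orthant dependence is characterized by \eqref{eq:nuod-f} and is preserved under coordinatewise increasing maps, pushing $\mathbf X$ through the $\phi_i$ keeps the resulting vector negative upper orthant dependent, giving (b). In fact one can merge (a) and (b) and bypass Theorem~\ref{prop:nuod-e} altogether by applying the functional inequality \eqref{eq:nuod-f} (equivalently \eqref{eq:na-implication} with singleton blocks) directly to the nonnegative increasing functions $\phi_1,\dots,\phi_n$, which gives $\E[\prod_i E_i] \le \prod_i \E[E_i] \le 1$.

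The step I expect to be delicate — and the one that must be stated carefully — is the monotonicity direction in (b), which is governed by the sign of $\lambda_i$: the map $\phi_i$ is increasing when $\lambda_i \ge 0$ and decreasing when $\lambda_i \le 0$. If all $\lambda_i \ge 0$ (the usual Chernoff regime) the $\phi_i$ are nonnegative increasing and the argument goes through verbatim via \eqref{eq:nuod-f}; if all $\lambda_i \le 0$, the $\phi_i$ are nonnegative decreasing and I would instead invoke the negative lower orthant analog \eqref{eq:nlod-f}, which $\mathbf X$ also satisfies. The genuinely problematic case is mixed signs, where combining an increasing map on one coordinate with a decreasing map on another creates positive dependence and reverses the product inequality: for negatively correlated standard Gaussians (correlation $\rho<0$) with $\lambda_1=1,\lambda_2=-1$ one computes $\E[\exp(X_1-X_2-1)]=e^{-\rho}>1$, so the e-value property can fail. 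Hence the clean proof requires the $\lambda_i$ to share a common sign — which holds automatically whenever each $\psi_i$ has one-sided domain, and which is the natural setting when Chernoff's method controls a single tail — and ensuring this sign-concordance is the crux to get right.
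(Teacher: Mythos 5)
Your proof is correct and follows essentially the same route as the paper's: the paper offers no explicit argument for this corollary, treating it as a direct consequence of Theorem~\ref{prop:nuod-e}, or equivalently of the negative-association inequality \eqref{eq:na-implication} applied with singleton blocks to the per-coordinate factors $E_i = \exp(\lambda_i(X_i-\mu_i)-\psi_i(\lambda_i)v_i)$ --- exactly your merged argument. Your sign-concordance caveat, however, is a genuine point that the paper glosses over, and it is worth recording. Read literally, with ``any $\lambda_i$ in the domain of $\psi_i$'' and the convention of \cite{boucheron2013concentration} under which a subGaussian variable has $\psi(\lambda)=\lambda^2/2$ on all of $\R$, the statement is false for mixed signs: your counterexample is valid, since for standard bivariate Gaussians with correlation $\rho<0$ (negatively associated by Lemma~\ref{lem:gaussian}) and $\lambda_1=1$, $\lambda_2=-1$, one gets $\E[\exp(X_1-X_2-1)]=e^{-\rho}>1$, so the claimed quantity is not an e-value. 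The corollary is rescued precisely by your restriction: if all $\lambda_i\ge 0$ the maps $x\mapsto \exp(\lambda_i(x-\mu_i)-\psi_i(\lambda_i)v_i)$ are nonnegative increasing and \eqref{eq:nuod-f} applies, while if all $\lambda_i\le 0$ they are nonnegative decreasing and \eqref{eq:nlod-f} applies; mixed signs create positive dependence between the factors and reverse the product inequality. This sign restriction is automatic in the framework of \cite{howard2020time}, where sub-$\psi$ domains are one-sided intervals $[0,\lambda_{\max})$, which is presumably the reading the authors intended but did not state.
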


If the sub-$\psi$ condition holds only for some subset $\Lambda \subseteq \mathrm{Domain}(\psi)$, then so does the final conclusion. The proof follows directly by invoking Corollary~\ref{cor:U} with the e-values $E_i := \exp(\lambda_i(X_i - \mu) - \psi_i(\lambda_i)v_i)$.

Such e-values appeared centrally in the unified framework for deriving Chernoff bounds in \cite{howard2020time}, and thus we call them Chernoff e-values. As a particular example, assume that for all $i$, we have $\mu_i=\mu,~ v_i=v$ and $\psi_i(\lambda)=\lambda^2/2$, and we also choose $\lambda_i=\lambda$. Denoting $\hat \mu_n := \sum_{i=1}^n X_i/n$, we obtain that $\exp(n\lambda (\hat \mu_n - \mu) - nv\lambda^2/2)$ is an e-value. Applying Markov's inequality, we see that $$\mathbb P\left(\hat \mu_n - \mu > \frac{\log(1/\alpha)}{n\lambda} + v\lambda/2 \right) \leq \alpha.$$ Choosing $\lambda = \sqrt{2\log(1/\alpha)/(nv)}$, we get  ``Hoeffding's inequality'' for averages of subGaussian random variables: $\mathbb P(\hat \mu_n - \mu > \sqrt{{2v\log(1/\alpha)}/{n}} ) \leq \alpha$, which is known to hold under negative association.

Other examples of this type can be derived; see for instance Example 2 in Appendix A.1 and \cite[Fact 1 and Lemma 3]{howard2020time}. 

 \section{False discovery rate control}\label{sec:fdr}
 
\subsection{The BH procedure}
\label{sec:BH-proc} 
In this section, we present an implication of our results in controlling the FDR. We will obtain an FDR upper bound that may not be very practical. Nevertheless, it is the first result we are aware of that controls FDR under negative dependence (without the Benjamini-Yekutieli corrections of $\approx \log K$~\citep{BY01}), and hence it represents an important first step that we hope open the door to future work with tighter bounds.

Let $H_1,\dots,H_K$ be $K$ hypotheses.
For each $k\in \mathcal K$,  $H_k$ is called a true null if $\p \in H_k$. 
Let $\mathcal N\subseteq \mathcal K$ be the set of indices of true nulls, which is unknown to the decision maker, and $K_0$ be the number of true nulls, thus the cardinality of $\mathcal N$.  
   For each $k\in \mathcal K$, 
  $H_k$ is  associated with p-value $p_k$, which is a realization of a random variable $P_k$. If $k\in \mathcal N$, then $P_K$ is a p-variable, assumed to be uniform under $[0,1]$. 
 We write the set of such $\mathbf P$ as $\mathcal U^K_{\mathcal N}$.
We do not make any distribution assumption on $P_k$ for $k\in \mathcal K\setminus \mathcal N$. 
 
   
 
A random vector $\mathbf P$ of p-values is \emph{PRD on the subset $\mathcal N$} (PRDS) if for any null index $k\in \mathcal N$ and   increasing set $A  \subseteq  \R^K$, the
function $x\mapsto \p(\mathbf P\in A\mid P_k\le x)$ is increasing on $ [0,1]$. 
If $\mathcal N=\mathcal K$, i.e., all hypotheses are null, then PRDS is precisely PRD. 
For a Gaussian-dependent random vector $\mathbf P$ with Gaussian correlation matrix ${\Sigma}$, it is PRDS if and only if 
$\Sigma_{ij}\ge 0$ for all $i\in \mathcal N$ and $j\in \mathcal K$.


A  testing procedure $\cD:[0,1]^K\to 2^{\mathcal K}$ 
reports rejected hypotheses (called discoveries) based on observed p-values. 
We write $F_{\cD}$ as the number of null cases that are rejected (i.e., false discoveries), and $R_{\cD}$ as the total number of discoveries truncated below by $1$, that is,
$$
F_{\cD} =   | \cD(\mathbf P) \cap \mathcal N| \mbox{~~~and~~~} R_{\cD} =|\cD(\mathbf P)|\vee 1.
$$ 
 The value of interest is   $F_{\cD}/R_{\cD}$, called the false discovery proportion (FDP), which is the ratio of the number of false discoveries to that of all claimed discoveries, with the convention $0/0=0$ (i.e., FDP is $0$ if there is no discovery; this is the reason of truncating $R_{\cD}$ by $1$). 
 \cite{BH95} introduced the FDR, which is the expected value of FDP, that is,
  $$ 
 \mathrm{FDR}_{\cD} =\E \left[ \frac{F_{\cD}}{R_{\cD}  } \right],
$$
where the expected value is taken  under the true probability.    
The \emph{BH procedure} $\cD_\alpha$ of \cite{BH95} rejects all hypotheses with the smallest $k^*$ p-values,
where 
 $$
k^*=\max\left\{k\in \mathcal K: \frac{K p_{(k)}}{k} \le \alpha\right\},
 $$
with the convention $\max(\varnothing)=0$, 
and accepts the rest.
For independent (\cite{BH95}) or PRDS (\cite{BY01}) p-values, the BH procedure has an FDR guarantee \begin{align}\label{eq:bh} 
\E\left[\frac{F_{\cD_\alpha}}{R_{\cD_\alpha}   }\right] \le \frac{ K_0}{K}\alpha 
\mbox{~~~ for all $\alpha \in (0,1)$}.\end{align}  
\begin{proposition}[\cite{BY01}]\label{pr:by}
If the  vector of p-values $\mathbf P$ is PRDS, then \eqref{eq:bh} holds. For arbitrarily dependent p-values, the error bound in \eqref{eq:bh} is multiplied by $\ell_K$, similar to \eqref{eq:simes3}.
\end{proposition}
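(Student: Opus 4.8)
The plan is to treat both claims through one common decomposition of the FDR and then diverge only in how sharply the number of rejections is controlled. Write $R:=R_{\cD_\alpha}$ for the number of discoveries. The starting point is the self-consistency of the step-up BH rule: a null index $k$ is rejected exactly when $P_k\le \alpha R/K$. Partitioning on the value of $R$ and using that the null marginals are uniform, this gives
\[
\E\left[\frac{F_{\cD_\alpha}}{R_{\cD_\alpha}}\right]=\sum_{k\in\mathcal N}\E\left[\frac{\id\{P_k\le \alpha R/K\}}{R}\right]=\sum_{k\in\mathcal N}\sum_{i=1}^K \frac{1}{i}\,\p\!\left(P_k\le \tfrac{i\alpha}{K},\,R=i\right).
\]
Everything now reduces to estimating the inner double sum, and the two regimes of the proposition differ only in how one bounds $\p(P_k\le i\alpha/K,\,R=i)$.

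For the \emph{arbitrary-dependence} bound I would make no structural assumption and argue crudely. Slice $\{P_k\le i\alpha/K\}$ into the disjoint bins $\{(j-1)\alpha/K<P_k\le j\alpha/K\}$ for $1\le j\le i$, interchange the sums over $i$ and $j$, and use that on $\{R=i\}$ with $i\ge j$ one has $1/i\le 1/j$ together with $\sum_{i\ge j}\id\{R=i\}\le 1$. Taking expectations, each null contributes
\[
\E\left[\frac{\id\{k\text{ rejected}\}}{R}\right]\le \sum_{j=1}^K \frac{1}{j}\,\p\!\left(\tfrac{(j-1)\alpha}{K}<P_k\le \tfrac{j\alpha}{K}\right)= \frac{\alpha}{K}\sum_{j=1}^K\frac1j=\frac{\alpha}{K}\,\ell_K,
\]
where the last equality is only the uniformity of the null marginal. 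Summing over the $K_0$ nulls yields the $\ell_K$-inflated bound, using no dependence hypothesis whatsoever.

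For the \emph{PRDS} bound, the crude replacement $1/i\le 1/j$ is exactly what costs the factor $\ell_K$, so it must be supplanted by a monotonicity argument. I would first establish the standard leave-one-out lemma for step-up rules: letting $R^\ast$ denote the number of rejections produced when $P_k$ is replaced by $0$ (a quantity determined by $\mathbf P_{-k}:=(P_j)_{j\ne k}$ alone), one shows that on $\{k\text{ rejected}\}$ the count is unchanged, $R=R^\ast$, because lowering $P_k$ only relabels ranks above it and the threshold for index $R+1$ was already violated. Since $k$ rejected forces $P_k\le \alpha R^\ast/K$, this collapses the per-null contribution to
\[
\E\left[\frac{\id\{k\text{ rejected}\}}{R}\right]=\E\left[\frac{\id\{k\text{ rejected}\}}{R^\ast}\right]\le \E\left[\frac{\id\{P_k\le \alpha R^\ast/K\}}{R^\ast}\right].
\]
Conditioning on $\mathbf P_{-k}$ makes $R^\ast$ a constant $r$, and under independence $\p(P_k\le r\alpha/K\mid\mathbf P_{-k})=r\alpha/K$, giving exactly $\alpha/K$ and hence $(K_0/K)\alpha$ after summation.

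The remaining and genuinely delicate point — the main obstacle — is to recover this same bound $\alpha/K$ under PRDS rather than independence, where $P_k$ and $R^\ast$ are dependent. Here I would invoke the PRDS hypothesis through the observation that $\{R\ge i\}$ is a decreasing event in $\mathbf P$, so $x\mapsto \p(R\ge i\mid P_k\le x)$ is non-increasing; equivalently, lowering $P_k$ can only enlarge the rejection set stochastically. Feeding this positive alignment of ``$P_k$ small'' with ``$R^\ast$ large'' into the leave-one-out representation is precisely what forces $\E[\id\{P_k\le \alpha R^\ast/K\}/R^\ast]\le \alpha/K$ rather than the inflated value — the division by the larger $R^\ast$ compensates for the more likely event, as one can already check on the comonotone extreme. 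Making this covariance/monotonicity step fully rigorous, carefully matching the varying thresholds $i\alpha/K$ to the monotone events and verifying the relevant sets are increasing in the sense of the PRDS definition, is the crux; the detailed verification is that of \cite{BY01}, which I would cite rather than reproduce.
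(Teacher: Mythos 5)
The paper contains no internal proof of this proposition: it is stated as an imported result of \cite{BY01}, so your proposal must be judged as a reconstruction of the cited argument. On that basis, your arbitrary-dependence half is complete, correct, and is essentially the original argument of \cite{BY01}: the decomposition $\mathrm{FDR}=\sum_{k\in\mathcal N}\sum_{i=1}^K \frac{1}{i}\,\p(P_k\le i\alpha/K,\,R=i)$, the slicing of $\{P_k\le i\alpha/K\}$ into bins of width $\alpha/K$, the crude swap $1/i\le 1/j$, and uniformity of the null marginals give $\frac{K_0}{K}\ell_K\alpha$ with no dependence assumption, matching the $\ell_K$ inflation in \eqref{eq:simes3}. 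Your PRDS half, however, takes a different route from \cite{BY01}: you pass to the leave-one-out count $R^\ast$ (rejections with $P_k$ replaced by $0$) via the identity $R=R^\ast$ on $\{k\text{ rejected}\}$, which is the device of the independence proof and of later treatments such as \citet{ramdas2019unified}; \cite{BY01} never introduce $R^\ast$, but instead work directly with $R$, writing $\p(P_k\le i\alpha/K,\,R=i)=\frac{i\alpha}{K}\,\p(R=i\mid P_k\le i\alpha/K)$ and telescoping the sum $\sum_i\bigl[\p(R\ge i\mid P_k\le i\alpha/K)-\p(R\ge i+1\mid P_k\le i\alpha/K)\bigr]$, where PRDS, applied to the increasing sets $\{R\le i\}$, supplies exactly the inequality $\p(R\ge i+1\mid P_k\le i\alpha/K)\ge \p(R\ge i+1\mid P_k\le (i+1)\alpha/K)$ that collapses the sum to at most $1$ and yields $\frac{K_0}{K}\alpha$.

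The one genuine gap is that your PRDS step is asserted rather than executed: the sentence about ``the division by the larger $R^\ast$ compensating for the more likely event'' is a heuristic, not an argument, and you then defer the crux to citing \cite{BY01}. That deferral is defensible here, since the proposition is itself attributed to \cite{BY01}, but you should recognize that the deferred step is the entire mathematical content of the PRDS claim --- everything before it holds under arbitrary dependence. If you wish to complete the proof within your own framework, observe that $R^\ast$ is a coordinatewise non-increasing function of $\mathbf P$ (constant in the $k$-th coordinate), so $\{R^\ast\le i\}$ is an increasing set and the telescoping argument above applies verbatim with $R^\ast$ in place of $R$; alternatively, drop the leave-one-out reduction altogether (it is needed for the independence proof, not the PRDS one) and telescope with $R$ itself, which is what \cite{BY01} do.
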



As a consequence of Proposition~\ref{pr:by}, for Gaussian-dependent $\mathbf P$, \eqref{eq:bh}   holds when the correlations are non-negative.   
In   the setting that all hypotheses are true nulls, i.e., $K_0=K$,  it holds that 
     $$
\E\left[\frac{F_{\cD_\alpha}}{R_{\cD_\alpha}  }\right] = 
 \p( | \cD_\alpha( \mathbf P) |>0)
 =
 \p\left(\bigcup_{k\in \mathcal K}\left\{ \frac{K P_{(k)}}{k} \le \alpha\right\}\right)
=\p(S_K(\mathbf P)\le \alpha).
$$
Hence, in this setting, the FDR is equal to $\p(S_K(\mathbf P)\le \alpha)$, and \eqref{eq:bh} becomes \eqref{eq:simes}.  
If $P_1,\dots,P_K$ are independent, 
and the null p-values are uniform on $[0,1]$, then \eqref{eq:bh} holds as an equality,  similar to \eqref{eq:simes2}.

\subsection{FDR control under negative dependence}

We provide an upper bound on the FDR of the BH procedure for weakly negatively dependent p-values, that shows that the error inflation factor is  independent of $K$ (unlike Proposition~\ref{pr:by}). The proof is based on some interesting results in a preprint by \cite{S18}.

\begin{theorem}\label{thm:Su-neg}
If the null p-values are weakly negatively dependent (and allowing for arbitrary dependence between nulls and non-nulls), then the BH procedure at level $\alpha$ has FDR at most \[\phi (\alpha,K):=\alpha ( (  - \log \alpha  + 3.18   ) \wedge \ell_K).\] 
The bound is asymptotically tight in the following sense:
Fix $\epsilon\in (0,1)$.
For all $\alpha>0$ small enough (depending on $\epsilon$) and $K$ 
large enough (depending on $\alpha$ and $\epsilon$), there exist iid uniform null p-values and some non-null p-values such that the BH procedure has an FDR at least $(1-\epsilon) \phi (\alpha,K)$.
\end{theorem}
Technically, the above bound can be improved to $\alpha ((  - \log \alpha  + 3.18   ) \wedge \ell_K \tfrac{K_0}{K})$, but since $K_0$ is unobservable, we omit it above for simplicity. The $\ell_K$ multiplier is slightly tighter for small $K$ and $\alpha$, but obviously the overall bound still does not grow with $K$.

\begin{proof}
The $\ell_K\alpha$ bound holds by Proposition~\ref{pr:by}, so we ignore it.
 Theorem 1 of \cite{S18} yields 
 \begin{align}\label{eq:Su_bd}
     \mathrm{FDR}_{\cD_\alpha} \le \alpha + \alpha \int_\alpha^1 \frac{\mathrm{FDR}_0(x)}{x^2} \d x,
 \end{align}
 where $\mathrm{FDR}_0(x)$ is the FDR of the BH procedure applied to only the null p-values at level $x$.
We will apply the upper bound on $\mathrm{FDR}_0(x) $ obtained from Theorem~\ref{th:neg}. 
We assume $\alpha\le0.3$, because there is nothing to show for the case $\alpha>0.3$ in which  the claimed FDR upper bound is larger than $1$. 
Let $\alpha_0=0.3$, which is chosen to be close to $1/e$. Note that
\begin{align*}
    \int_\alpha^{\alpha_0} \frac{1}{x^2} \sum_{k=4}^\infty \frac{\left(e x\right)^k}{\sqrt{2\pi k}}  \d x
    &= \int_\alpha^{\alpha_0}  \sum_{k=4}^ \infty   \frac{1}{\sqrt{2\pi k}} e^k x^{k-2} \d x\\
    &= \sum_{k=4}^ \infty  \frac{1}{k-1} \frac{1}{\sqrt{2\pi k}}e^k x ^{k-1} |_{\alpha}^{\alpha_0} \le \sum_{k=4}^ \infty \frac{1}{k-1} \frac{1}{\sqrt{2\pi k}}e^k x ^{k-1} |_{0}^{0.3}  \approx 0.2473.
\end{align*}  
By applying \eqref{eq:sk_neg-use} to \eqref{eq:Su_bd}, and using the above upper bound, we get
\begin{align*}
      \mathrm{FDR}_{\cD_\alpha} &\le \alpha + \alpha \int_\alpha^1 \frac 1{x^2} \min\left\{ \left( x  +2x^2 +
      \frac{9}2 x^3+ \sum_{k=4}^\infty  \frac{ \left(
   e x\right)^k  }{\sqrt{2\pi k}}   \right),1 \right\}\d x \\
   &\le \alpha + \alpha \left(\int_\alpha^{\alpha_0} \frac 1{x^2}  \left( x  +2x^2 +
      \frac{9}2 x^3+ \sum_{k=4}^ \infty  \frac{ \left(
   e x\right)^k  }{\sqrt{2\pi k}}\right)\d x + \int_{\alpha_0}^1 \frac 1{x^2} \d x\right)\\
   &\le  \alpha +  \alpha  \left(\int_\alpha^{\alpha_0} \frac 1{x} \d x +    \int_0^{\alpha_0}    \left(2+\frac 9 2 x \right) \d x  + 0.2474 + \int_{\alpha_0}^1 \frac1 {x^2} \d x   \right) \\
   &\le \alpha +  \alpha  \left( \log  \alpha_0 -\log \alpha  +  1.05  + (1/\alpha_0-1)  \right)
    \le  \alpha  \left(  -\log \alpha  + 3.1792  \right),
\end{align*}
and this gives the stated upper bound. 

Next, we show sharpness. 
If the null p-values are iid uniform on $[0,1]$ (but allowing for arbitrary dependence between nulls and non-nulls),  Theorem 3 of \cite{S18} gives an upper bound 
$$
     \mathrm{FDR}_{\cD_\alpha} \le \alpha (-\log \alpha +1).
$$ 
A closer examination of the proof of Theorem 4 of \cite{S18} shows that the FDR in this case (that is, with iid uniform nulls) can be at least $(1-\epsilon) \alpha(-\log \alpha +1)$ for $\alpha$ small enough and $K$ large enough.
Therefore,  it suffices to notice that our bound
$\phi (\alpha,K)=\alpha ( (  - \log \alpha  + 3.18   ) \wedge \ell_K)$
and \cite{S18}'s bound 
$\alpha   (  - \log \alpha  + 1  )$ are asymptotically equivalent for $\alpha $ sufficiently small
and $K$ sufficiently large.
\end{proof}

Although the sharpness statement in Theorem \ref{thm:Su-neg} implies that the bound there cannot be improved essentially, 
it is unclear whether the bound can be improved
if we further assume that all p-values (nulls and non-nulls) are negatively dependent. We leave this as an open question.

 Note that for $K=2$, the Simes error bound was $\alpha+\alpha^2$, and so \eqref{eq:Su_bd} gives
 $
 \mathrm{FDR}_{\cD_\alpha} \le  \alpha (2- \alpha - \log\alpha).
 $
For $\alpha\le 1/2$, this bound is weaker than that of \cite{BY01}, which gives $1.5\alpha$.

The $\mathrm{FDR}$ upper bound in Theorem~\ref{thm:Su-neg} (ignoring the $\ell_K$ term) for some $\alpha$   are given in Table~\ref{tab:Su-neg}.  

\begin{table}[H]
    \centering\renewcommand{\arraystretch}{1.5}
    \begin{tabular}{c|cccccc}
        $ \alpha$  & 0.00603 & 0.01 & 0.01334 & 0.05 &  0.1    \\\hline
         $\text{FDR}$ & 0.05 &  0.07784 & 0.1  &  0.3087 & 0.54812 \\\hline
         $\text{FDR}/\alpha$ & 8.292 & 7.784 & 7.502 & 6.175 & 5.482\\\hline 
    \end{tabular}
    \caption{Values of the FDR upper bounds in Theorem~\ref{thm:Su-neg}.}
    \label{tab:Su-neg}
\end{table}

As seen from Table~\ref{tab:Su-neg}, the upper bound produced by Theorem~\ref{thm:Su-neg} can be   quite conservative in practice, although it is better than the $\ell_K$ correction of \cite{BY01} for large $K$.

A remaining open question is to find a better FDR bound with stronger conditions of negative dependence.
On the other hand, the e-BH procedure (\cite{WR22}) controls FDR for arbitrarily dependent e-values, which we will compare with in our simulation results. 

\begin{remark}[Two-sided Gaussain BH conjecture]\label{rem:r1-2}
{Consider the two-sided Gaussian p-values in Remark \ref{rem:r1-1}, i.e., $P_k=2\Phi(-|Y_k|)$ for $k\in \mathcal K$ where $(Y_1,\dots,Y_k)$ is multivariate Gaussian with standard Gaussian marginals and an arbitrary correlation matrix. 
A folklore conjecture is that for such p-values, the BH procedure at level $\alpha$ controls FDR at $\alpha$.
See \citet[Chapter 4]{roux2018inference} for a discussion. 
This conjecture holds true in the case of $K=2$, as analyzed by \cite{reiner2007fdr}.}
\end{remark}

\subsection{Group-level FDR control}

Sometimes, data are  available at a higher resolution (say single nucleotide polymorphisms along the genome, or voxels in the brain), but we wish to make discoveries at a lower resolution (say at the gene level, or higher level regions of interest in the brain). 
This leads to the question of group-level FDR control \citep{ramdas2019unified}. 
The $K$ hypotheses are divided into $G$ groups. We have p-values for the $K$ individual hypotheses, but wish to discover groups that have some signal without discovering too many null groups (a group is null if all its hypotheses are null, and it is non-null otherwise). In other words, we wish to control the group-level FDR with hypothesis-level p-values.

A natural algorithm for this is to combine the p-values within each group using, say, the Simes combination, and then apply the BH procedure to these group-level ``p-values''. We use ``p-values'' in quotations because while the Simes combination does lead to a p-value under positive dependence (PRDS), as we have seen it only leads to an approximate p-value if the p-values are negatively dependent. Let us call this the Simes+BH$_\alpha$ procedure; to clarify, it applies the BH procedure at level $\alpha$ to the group-level Simes ``p-values'' formed by applying the Simes combination to the p-values within each group, without any corrections. Then we have the following result.

\begin{proposition}\label{prop:BH-Simes} If the p-values are negatively associated, the Simes+BH$_\alpha$ procedure controls the group-level FDR at level $3.4 \alpha(-\log(3.4\alpha)+3.18)$. 
\end{proposition}
\begin{proof} {By Corollary \ref{cor:mult-error}, for negatively associated p-values $\mathbf P$, their Simes combination $S_K(\mathbf P)$ multiplied by 3.4 is a p-value. 
Moreover, the p-values resulting from applying the Simes combination to each group are negative orthant dependent by Proposition~\ref{prop:simes-NA-to-NOD}.  
Running the BH$_\alpha$ procedure on these  p-values is equivalent to running the BH$_{3.4\alpha}$ procedure on the  ones corrected by multiplying 3.4. 
Then, the FDR control follows from  Theorem \ref{thm:Su-neg}.} 
\end{proof}

{In contrast to Proposition \ref{prop:BH-Simes}, 
if the p-values are PRDS, the Simes+BH$_\alpha$ procedure controls the group-level FDR at level $\alpha$; this follows as a direct consequence of results in~\citet{ramdas2019unified}.} 
As earlier in the paper, both instances of $3.4$ in Proposition \ref{prop:BH-Simes} can be replaced by $3.4 \wedge \ell_K$, which is  tighter for small $K$, but it has been omitted for clarity.

The FDR bound in Proposition~\ref{prop:BH-Simes}, due to repeatedly applying bounds under negative dependence, may be quite conservative in practice. Nevertheless,  it is the first result on the group-level FDR control under negative association which does not has an exploding penalty term (compared to the classic BH procedure) as $K\to\infty$, similarly to 
 the case of Theorem~\ref{thm:Su-neg}. 
Future studies may improve this bound. 

\smallskip

As a different grouped setting to consider, suppose we want to control the FDR at the level of the individual hypotheses, but the p-values happen to be divided into $G$ groups such that they  are independent across groups and negatively dependent within each group. Then at least two options exist. One option is to apply the BH procedure to all hypotheses at once (correcting for negative dependence). Another option is to divide the p-values into $K/G$ independent sets (each containing at most one hypothesis from each group), and apply the BH procedure at level $\alpha G/K$ separately to each set, and simply take the union of all discoveries made. It is likely that neither method uniformly dominates the other, and their relative performance will depend on $G,K$ and the type of dependence.

\section{Simulations}\label{sec:sims}

We now apply our results in Theorems~\ref{th:neg},~\ref{prop:nuod-e} and~\ref{thm:Su-neg} to   global testing or multiple testing problems, and illustrate them by means of several simulation experiments.

The test statistics $X_k,~k\in\mathcal K$  are generated  from correlated z-tests, and they are jointly Gaussian. 
The null hypotheses are $ \mathrm{N}(0,1)$ and the alternatives are $ \mathrm{N}(\delta,1)$,
where $\delta\ge 0$.  
Among the $K$ test statistics, $K_0$ of them are drawn from the   null hypothesis and $K-K_0$ of them are from the alternative hypothesis. Let $\pi_0= K_0/K$ be the proportion of true null hypotheses. 

We compute the p-values 
\(
  P(x)
  :=1-
  \Phi(x),
\)
for $x  \in \{X_k: k\in \mathcal K\}$;
which are based on the most powerful test (Neyman--Pearson lemma). 
To compute e-values, we first compute the likelihood ratio:
 $$
  L_\delta (x)
  :=
  \frac{\exp(-(x-\delta)^2/2)}{\exp(-x^2/2)}
  =
  \exp(\delta x - \delta^2/2)
 $$
of the alternative to the null density (which are obviously unit mean under the null, and hence e-values). 
Since $\delta$ may not be known to the tester, we  take an average of $L_\delta(x)$ with respect to $\delta$ on $[a,b]$,
that is,
\begin{align}\label{eq:compute-e}
E(x): = \frac{1}{b-a}\int_{a}^b L_\delta(x) \d \delta = {\sqrt{{2\pi}} \exp(x^2/2) \left(\Phi(x-a) - \Phi (x-b) \right)} .
\end{align}
Since mixtures of e-values are also e-values, the above is also an e-value.
Note that the validity of the p-values and e-values defined above only depends on the null hypothesis but not on the alternative hypothesis (but the power depends on reasonably accurate specification of the alternative). 
As mentioned before, if $(X_1,\dots,X_K)$ is  negatively Gaussian dependent, 
then the p-values, as componentwise increasing transforms of $(X_1,\dots,X_K)$,   are  negatively Gaussian dependent. 
Similarly, the e-values, as  componentwise decreasing transforms of $(X_1,\dots,X_K)$,  are also  negatively Gaussian dependent. 

\subsection{Testing a global null}
\label{sec:62}
We first run simulations to test 
a global null with various methods combining p-values or e-values. 
We are mainly interested in the methods in this paper under negative dependence, and we will also look at their performance under positive dependence for a comparison. That is, 
we consider the following settings, and each simulation will be repeated 10,000 times  and we report their average. 
\begin{enumerate}
    \item Negative Gaussian dependence: Set the pairwise correlation of $(X_1,\dots,X_K)$ to be uniformly chosen between $[-1/(K-1),0]$ in each simulation.\footnote{Effectively, we are simulating from mixtures of negatively Gaussian-dependent p-values; see Remark \ref{rem:Gaussian-mix}. Recall that $-1/(K-1)$ is the smallest possible value for the pairwise correlation coefficients of an exchangeable random vector.}
    For this setting, we 
    {provide a few different scenarios of $(K,\pi_0)$: we take $\pi_0\in\{0, 50\%, 90\%\}$, corresponding to full signal, rich signal, and sparse signal,
    and fix $K=100$ in the main text. Additional simulation results for $K\in \{10,1000\}$, corresponding to a smaller pool and a larger pool of hypotheses, are provided in Appendix.}
    \item Positive Gaussian dependence: Set the pairwise correlation of $(X_1,\dots,X_K)$ to be uniformly chosen between $[0,1]$ in each experiment. For this setting, we consider two different scenarios of $(K,\pi_0)$: $K=1000$ and $\pi_0\in\{0,50\%\}$, to illustrate some simple comparative observations.
\end{enumerate}
We let $\delta$ vary in $[0,3]$ and the e-values in \eqref{eq:compute-e} will be computed with averaging $\delta\in [a,b]=[0,3]$. Fix the type-I error upper bound as $0.05$.
We consider the following seven methods:
\begin{enumerate}[label=(\alph*)]
\item the Simes method with $\ell_K$ correction in \eqref{eq:simes3};
\item the Bonferroni correlation (the minimum of p-values times $K$); 
\item the Simes method with negative dependence (ND) correction (first row of Table~\ref{tab:simes-neg}); 
 \item 
 arithmetic mean of of e-values; 
 \item order-2 U-statistic (U2) of e-values in \eqref{eq:U2};
 \item order-3 U-statistic (U3) of e-values  in \eqref{eq:U3};
 \item the product e-value.
 \end{enumerate} 
 All methods based on e-values are compared against the threshold $20$, so that they have a type-I error guarantee of $0.05$.
 Among these methods, (a), (b) and (d) are valid under arbitrary dependence structures. Method (c) is valid under both negative Gaussian dependence (Theorem~\ref{th:neg}) and positive Gaussian dependence (implying PRD in Proposition~\ref{prop:PRD}). The remaining methods (e), (f) and (g) are valid under negative orthant dependence (Theorem~\ref{prop:nuod-e}).
 
We  plot the rejection probabilities of the above methods in Figure~\ref{fig:globalnull_neg} 
for the setting of negative Gaussian  dependence
and in Figure~\ref{fig:globalnull_pos}
for the setting of positive Gaussian dependence. 

\begin{figure}[t]
    \centering
    \caption{Global null testing (negative Gaussian dependence). All three subplots show power against $\delta$. The left endpoint of $\delta=0$ actually represents the achieved type I error, which appears to be controlled at the nominal level $\alpha=0.05$ for all methods. The key observations are in the text.}
    \label{fig:globalnull_neg}
    \includegraphics[width=0.99\textwidth]{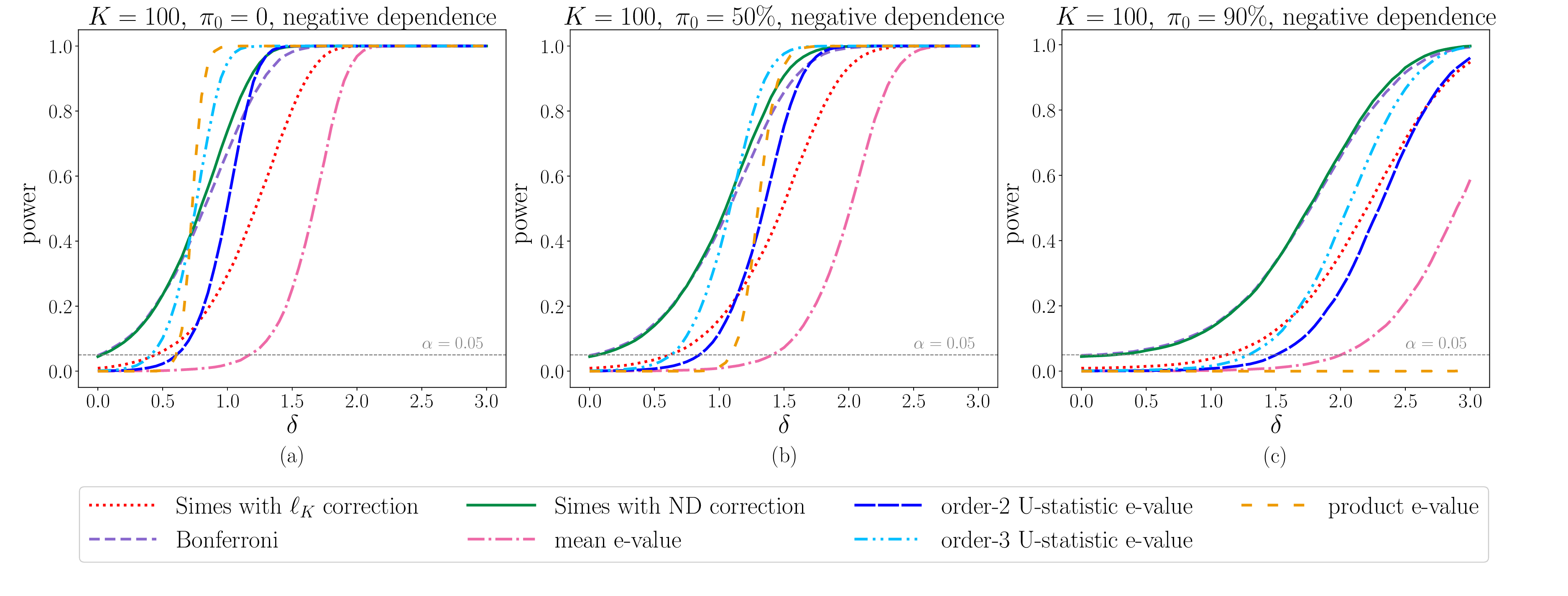}
    \vspace{-6mm}
\end{figure}

\begin{figure}[t]
    \centering
    \caption{Global null testing (positive Gaussian dependence)}
    \label{fig:globalnull_pos}
    \includegraphics[width=0.99\textwidth]{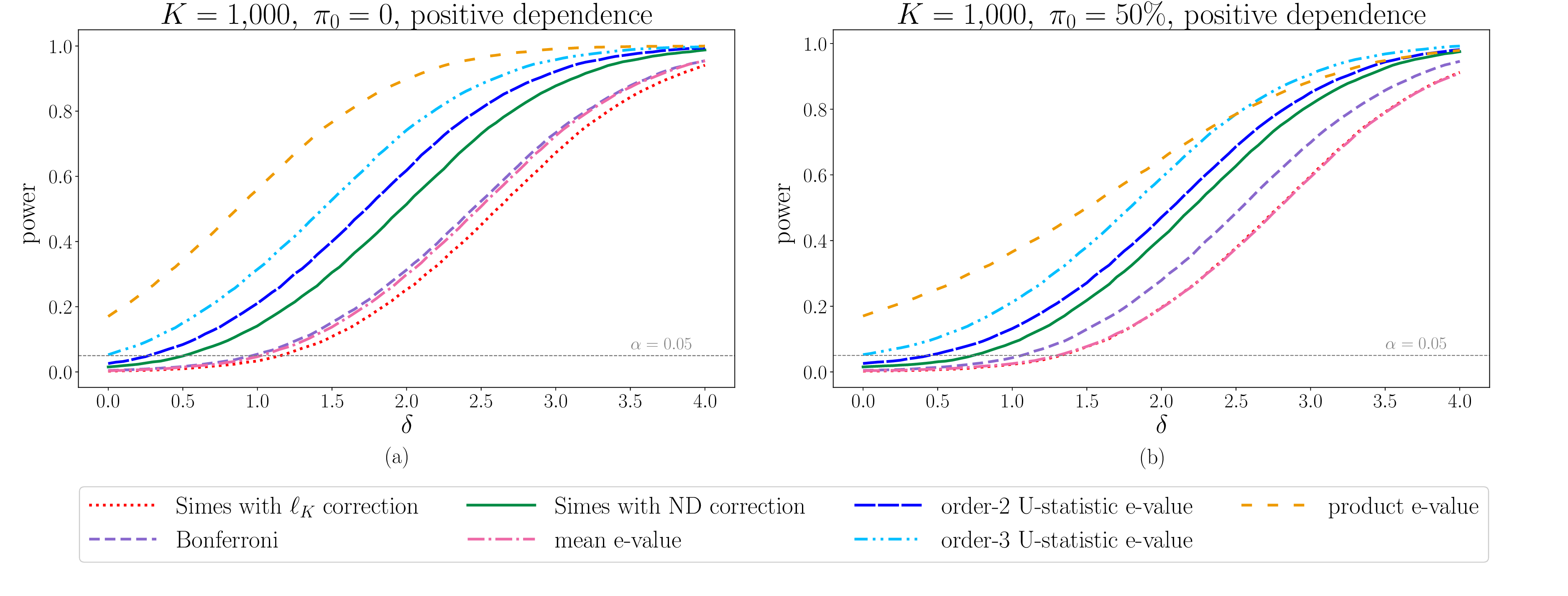}
    \vspace{-0.95cm}
\end{figure}

From Figure~\ref{fig:globalnull_neg}, observe that 
the product e-value has strong power when the signal is full ($\pi_0=0$), but its performance reduces substantially when the signal is rich ($\pi_0=50\%$) or sparse ($\pi_0=90\%$). 
The U3 e-value performs quite well when the signal is full or rich, and loses power when signal is sparse, and the U2 e-value is similar to the U3 e-value with lower power. 
The Simes method with ND correction 
performs very well in all cases, and 
it is similar to the Bonferroni correction in case of sparse signal. 
The arithmetic average e-value and the Simes method with $\ell_K$ correction are not very competitive. 

 Figure~\ref{fig:globalnull_pos}   illustrates that the Simes method with ND correction outperforms the Bonferroni correction substantially in the setting of positive dependence; note that both are valid in this setting. 
 On the other hand, although the rejection probabilities of the U2/U3 and product e-values are quite high in  Figure~\ref{fig:globalnull_pos},   they are not theoretically valid in  the  setting of positive dependence, by noting that $E_iE_j$ is not an e-value if the e-values $E_i$ and $E_j$ are positively correlated. 
 The empirical type-I error of the U2/U3 e-values do not seem to exceed the nominal value $0.05$; this is because $1/20$ is practically a conservative bound when applying to e-values.
    
\subsection{Multiple testing procedures with FDR control}
Next, we compare multiple testing procedures with FDR control. 
The setting is similar to the global null testing experiment, but we focus on negative dependence only.
Set the pairwise correlation to be  $-1/(K-1)$ for  $(X_1,\dots,X_K)$. 
Since FDR control is usually applied in large-scale testing, we consider two specifications of $(K,\pi_0)$: $K=10,000$ or $K=100,000$, and $\pi=80\%$.

We let $\delta$ vary in $[2,4]$ and the e-values in \eqref{eq:compute-e} will be computed with averaging $\delta\in [a,b]=[2,4]$. Fix the FDR upper bound as $0.1$. Each simulation will be repeated 1,000 times and we report their average. 
The procedures that we compare are: 
\begin{enumerate}[label=(\alph*)]
\item the   BH procedure (Section~\ref{sec:BH-proc});
\item the BH procedure with $\ell_K$ correction (Section~\ref{sec:BH-proc}); \item the BH procedure with ND correction (first row of Table~\ref{tab:Su-neg}); 
\item the e-BH procedure (applying the BH procedure to $1/E$; \cite{WR22}). 
\end{enumerate}

 The average numbers of discoveries produced by the four methods are reported in Figure~\ref{fig:BH}.
As expected from its definition, none of the other three methods (b), (c) and (d) is as powerful as the BH procedure (a), but the  BH procedure without correction does not have a   theoretical FDR guarantee under negative dependence.
  Both BH with $\ell_K$ correlation and e-BH are valid under arbitrary dependence, and none of them dominates each other.
  The  BH procedure with ND correction performs better than the other two methods (b) and (d).

\begin{figure}[t]
    \centering    \caption{Multiple testing with FDR control (negative Gaussian dependence)}
    \label{fig:BH}
    \includegraphics[width=0.99\textwidth]{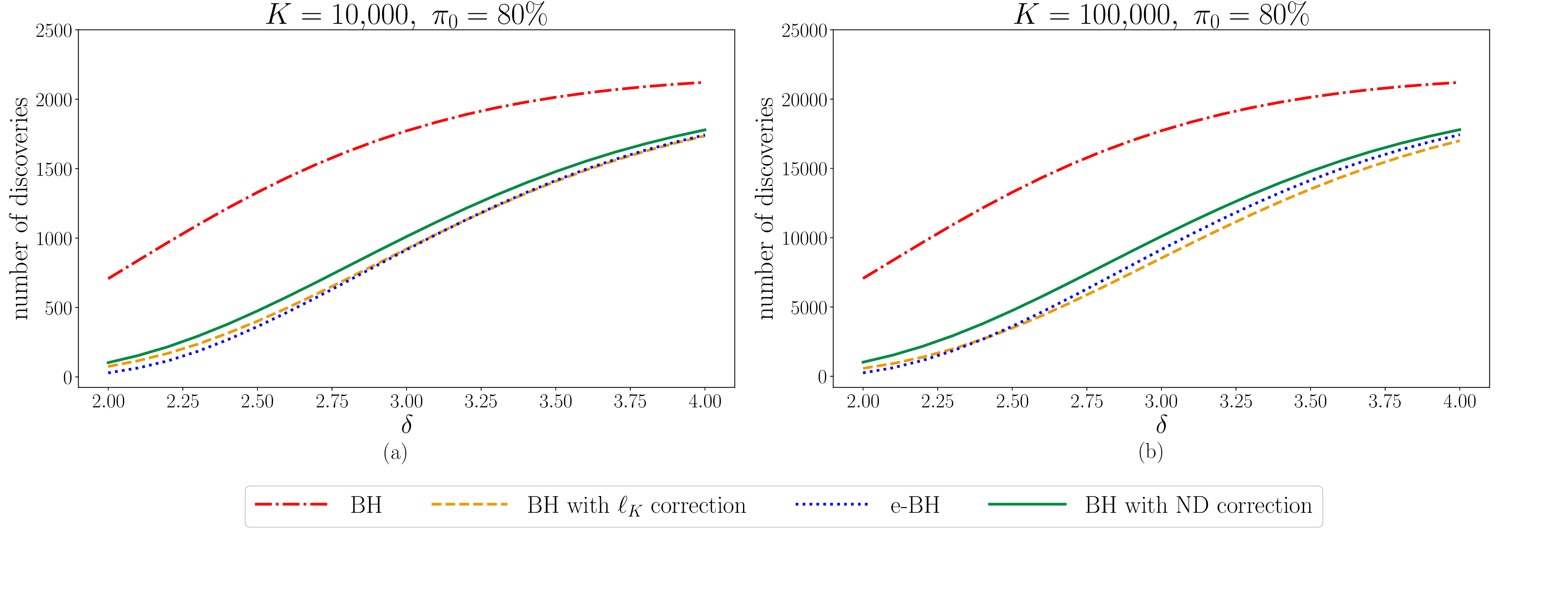}
    \vspace{-1cm}
\end{figure}
  
  Note that the power comparison between the $\ell_K$ correction and the ND correction depends on $K$, because $\ell_K$ explodes as $K$ increases but the ND correction is invariant with respect to $K$. Improvements to the constants derived in our paper can only exaggerate the difference between (c) and (b,d).

\section{Conclusion}\label{sec:conc}

 This paper provides, to our knowledge, the first bounds for multiple testing methods under negative dependence,  in particular the important Simes test and the  BH procedure. Some auxiliary results include error bounds for the weighted Simes test, combining negatively dependent e-values, and some implications under negative Gaussian dependence.

{\textbf{Practical implications.} When faced with negative dependence, a practitioner has two clear alternatives: ignore the dependence and run the BH procedure as is (current practice), or incorporate corrections for the dependence and run BH at a more conservative error level. If practitioners opt for the first choice, then our paper's results show that the inflation in their achieved FDR guarantee will be a simple constant factor, independent of the number of hypotheses. This is in contrast to the BY correction, which suggests an FDR inflation potentially growing logarithmically with the number of hypotheses.} 

{\textbf{Discussions.} 
Seen from \cite{HR95} and Theorem \ref{th:neg}, 
the type-I error of the Simes test under negative dependence  can be (slightly) inflated. An
alternative way to address type-I error inflation is to   explicitly model and estimate the dependence structure and
incorporate it into their decision rule. Copula-based models have been considered in this regard; see e.g., \citet[Section 3.5 and 3.6]{cui2021handbook}. This approach is different from the setting of our paper, as we do not have access to model or estimate the dependence structure explicitly; instead we assume some forms of negative dependence.} 
{Some papers study asymptotic behaviour as $K\to\infty$, which is different from our setting; see  e.g., \cite{delattre2016empirical} and \cite{kluger2024central}.
}

\textbf{Open problems.} 
The most interesting open problem is the BH conjecture for negative dependence, which states that 
    \(\E\left[ {F_{\cD_\alpha}}/{R_{\cD_\alpha}   }\right]\le \frac{K_0}{K} \tilde s_K(\alpha)\)
for any negatively Gaussian dependent vector of p-values, where 
$\tilde s_K(\alpha)$ is in \eqref{eq:sk_neg}.
Alternatively, it will be interesting to obtain other multiplicative corrections of $\alpha$, i.e., without involving the term $-\log(\alpha)$ as in Theorem~\ref{thm:Su-neg}.

Recall that most of our results about Simes and the BH procedure involved the weakest form of negative dependence that we defined (some results about e-values required stronger notions, though).
A second open problem involves the consideration of whether any of the stronger notions of negative dependence (than weak negative dependence) lead to even better bounds for Simes and BH. 
Our Theorem~\ref{thm:Su-neg} assumes  weak negative dependence  among only null p-values, and it remains unclear whether a better bound can be obtained by assuming some form of negative dependence among all p-values.

As the number of hypotheses $K$ increases, p-values may typically be less negatively
correlated on average (the average pairwise correlation must be bounded below by $-1/(K-1)$).
Therefore, one may expect that for certain types of negative dependence, a large $K$ would lead to a better bound than Theorem \ref{th:neg}. However, it is not immediately clear whether this holds true because the large dimension also allows for more flexibility  of dependence (thus, more cases to defend). Resolving this, perhaps for specialized models, is left for future work.

Other open problems include extending our results to adaptive Storey-BH-type procedures, to the weighted BH procedure, and to grouped, hierarchical or multilayer settings~\citep{ramdas2019unified}.  
We hope to make progress on some of these questions in the future.
 
\paragraph{Acknowledgments.} We thank Sanat Sarkar for engaging conversations and stimulating suggestions, as well as a missing reference. We also thank Ajit Tamhane for a relevant reference



\bibliography{extra2}
\bibliographystyle{plainnat}

\appendix
\section{Examples of negative dependence in testing}\label{sec:eg}

We give some examples in  testing and multiple comparisons  where negative dependence naturally appears. 

\subsection{Four examples}
\begin{example}[Tests based on split samples]
\label{ex:1}
Consider a fixed population $\mathbf x=(x_1,\dots,x_m) \in \R^m$,
and suppose that $K$ groups of scientists
are using samples from the population to test their hypotheses. 
The p-value (or e-value) for group $k\in \mathcal K$ 
is computed by $f_k(\mathbf x_{A_k})$, where  $A_k$ 
is a randomly chosen subset of $\{1,\dots,m\}$ and 
$f_k$ is an increasing function.
Since the groups are using different part of the population, 
$A_1,\dots,A_K$ are disjoint sets. 
Using Theorem 2.11 of \cite{JP83}, which says that a permutation distribution is negatively associated, we know that the p-values (or e-values) $f_k(\mathbf x_{A_k})$, $k\in \mathcal K$, as increasing functions of non-overlapping subsets of negatively associated observations (P6 of \cite{JP83}), are negatively associated. 
\end{example}

\begin{example}[Testing the mean of a bag of numbers]  Suppose that the data, represented by the vector $\mathbf X = (X_1,\dots,X_n)$,  are drawn without replacement (uniformly) from a bag of $N$ numbers $x_1,\dots,x_N$, each in $[0,1]$, whose average is $\mu:=\sum_{i=1}^N x_i/N$. Then, we have seen before that $\mathbf X$ is negatively associated. Further, it is clear that $\mathbb{E}[X_i] = \mu$ for each $i$. Thus $(1+\lambda(X_i-\mu))$ is an e-value for any $\lambda \in [-1,1]$, and by Corollary 13 in \cite{chi2022multiple}, $\prod_{i=1}^n (1+\lambda(X_i-\mu))$ is also an e-value. This fact is useful, for example if we want to test $\mu \leq 0.5$ against $\mu>0.5$; in this case, for any $\lambda\in[0,1]$, $\prod_{i=1}^n (1+\lambda(X_i-0.5))$ is an e-value.
\end{example}
\begin{example}[Round-robin tournaments]
\label{ex:2}
 
Imagine that $K$ players play a round-robin tournament (meaning that each pair of players play some number of games against each other). Suppose that we wish to test the 
hypotheses that player $k\in \mathcal K$ has no advantage or disadvantage over any other players, or the
global null hypothesis that all players are equally good. Assume that the game outcomes are independent.
Two players being equally good means that whenever they play a game, both players have equal chance of winning or scoring a certain number. Equivalently, since all sports have player rankings or seedings, the global null hypothesis effectively states that these rankings are irrelevant.
Let $(X_{ij}^m)_{i,j\in \mathcal K,~m=1,\dots,M_{ij}}$ be the results of all games, where
$M_{ij}$ is the number of games played between player $i$ and player $j$,
and $X_{ij}^m= - X_{ji}^m$. The $k$-th hypothesis is that $X_{kj}^m$ is symmetrically distributed around $0$ for all $j$ and $m$.
For $k\in \mathcal K$, let the p-value or e-value be given by $Y_k= f_k(X_{kj}^m: j\ne k,~m=1,\dots,M_{kj})$ for some increasing function $f_k$.
Then $Y_1,\dots,Y_K$ are negatively associated using P6 of \cite{JP83}.   One way to construct p-values and e-values for this testing problem is described in Section \ref{app:A}.
\end{example}

\begin{example}[Cyclical or ordered comparisons]
\label{ex:3}
 
Suppose that $X_1,\dots,X_K$ are independent random variables representing scores of $K$ players in a particular order, e.g., pre-tournament ranking. We are interested in testing whether two players adjacent  in the list have equal skills. The $k$-th null hypothesis, under some assumptions,
$X_k$ and $X_{k+1}$ are identically distributed, where we set $X_{K+1}=X_1$ but we may safely omit $H_K$.  
  For example, for $k\in \mathcal K$, a p-value (or e-value)    may be obtained in the form  $ f_k(X_k-X_{k+1})  $ for some increasing   function $f_k$,
  since the score differences between two adjacent players are useful statistics.  
  Indeed, we can show a stronger result:
For any component-wise  increasing  functions $h_k:\R^2 \to \R$, $i\in \mathcal K$  and independent random variables $X_1,\dots,X_K$, let $Y_k=
 h_k(X_k,-X_{k+1}) $, $k\in \mathcal K$, 
where either $X_{K+1}=X_1$ or $X_{K+1}$ is independent of $(X_1,\dots,X_K)$. Then, the random vector $(Y_1,\dots,Y_K)$ is negative orthant dependent. 
 This result is shown in Proposition~\ref{prop:2} in Section \ref{app:A}.

\end{example}

\subsection{Technical details}
\label{app:A}

Below, we first explain the construction of e-values and p-values for the round-robin tournament test in Example \ref{ex:2}, and then show the statement in Example \ref{ex:3} on negative orthant dependence for ordered comparison.

One way to test the hypotheses in Example \ref{ex:2} is to first construct e-values for each game, combine them to get e-values for each pair of players, and then combine them further to get e-values for each individual player. Finally, to test the global null, one can combine e-values across all players using the U-statistic of order 2 or 3. 

We consider the case where only win, lose and draw are possible outcomes of each game; the case of general scores is similar. Our e-values for a single game are constructed using the principle of testing by betting~\citep{shafer2021testing}. To elaborate, imagine that for the $m$-th game between player $i,j$ we have one (hypothetical) dollar at hand. To form the e-value $E^{(m)}_{ij}$ and we bet some fraction $\epsilon \in [0,1]$ that $i$ will beat $j$. If the game is a draw, our wealth remains 1. If we were right, our wealth increases to $1+\epsilon$, and if we were wrong, it decreases to $1-\epsilon$. $E^{(m)}_{ji}$ is constructed in the opposite fashion: so if $E^{(m)}_{ij}=1+\epsilon$, then $E^{(m)}_{ji}=1-\epsilon$; this is the root cause of the resulting negative dependence. 
Importantly, $\epsilon$ (which could depend on $i,j$, but we omit this for simplicity) must be declared before the game occurs. $E^{(m)}_{ij}$ represents how much we multiplied our wealth due to the $m$-th game and this is an e-value, because under the null hypothesis, there is an equal chance of gaining or losing $\epsilon$, so our expected multiplier equals one.

If a pair of players $i, j$ have played $M_{ij}$ games, let the overall e-value for that pair be defined as $E_{ij} = \prod_{m=1}^{M_{ij}} E^{(m)}_{ij}$. In fact the wealth process across those games forms a nonnegative martingale under the null, since it is the product of independent unit mean terms; however we will not require this martingale property in the current analysis. A large $E_{ij}$ means that player $i$ wins many more games than they lose to $j$. 

Let $E_i$ denotes the e-value for each player $i\in \mathcal K$, that is, $E_i = \prod_{j=1, j \neq i}^K E_{ij}$. Each $E_i$ is an e-value for the same reason as before: it is a product of independent unit mean terms. If $E_i$ is large, it reflects that player $i$ more frequently beat other players than lost to them.

Using Properties P1 and P7 of \cite{JP83}, $(E_{ij})_{i,j\in \mathcal K}$ is negatively associated because its components are constructed from mutually independent random vectors $(E_{ij},E_{ji})$ and each of these vectors is  counter-monotonic (hence negatively associated).  
We can further see that $(E_1,\dots,E_K)$ is 
 also negatively associated, because  each $E_k$ is an  increasing function of $(E_{kj})_{k\in \mathcal K}$ (P6 of \cite{JP83}).
Thus, a final e-value for the global null test can be calculated using the U-statistic of order 2 in Equation 28 of \cite{chi2022multiple}, $E := \sum_{i < j} E_i E_j/{K\choose{2}}$, 
or any other U-statistics as guaranteed by Corollary 13 of \cite{chi2022multiple}.

Next, we show a result verifying the claim of negative orthant dependence in Example \ref{ex:3}. 
 
\begin{proposition}
\label{prop:2}
For any component-wise  increasing  functions $h_i:\R^2 \to \R$, $i\in \mathcal K$  and independent random variables $X_1,\dots,X_K$, let $Y_i=
 h_i(X_i,-X_{i+1}) $, $i\in \mathcal K$, 
where either $X_{K+1}=X_1$ or $X_{K+1}$ is independent of $(X_1,\dots,X_K)$. Then, the random vector $(Y_1,\dots,Y_K)$ is negative orthant dependent. 
\end{proposition}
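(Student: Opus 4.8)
The plan is to deduce negative orthant dependence of $(Y_1,\dots,Y_K)$ directly from Proposition~\ref{prop:1}, which already supplies the key product-expectation inequality. Recall that negative orthant dependence means both negative lower orthant dependence~\eqref{eq:def-nlod} and negative upper orthant dependence~\eqref{eq:def-nuod} hold. By the characterizations~\eqref{eq:nlod-f} and~\eqref{eq:nuod-f}, it suffices to verify that $\E[\prod_{i=1}^K \phi_i(Y_i)] \le \prod_{i=1}^K \E[\phi_i(Y_i)]$ for all nonnegative decreasing $\phi_i$ (for the lower orthant case) and for all nonnegative increasing $\phi_i$ (for the upper orthant case). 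So the whole task reduces to showing these two families of inequalities.

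First I would treat the upper orthant case. Writing $Y_i = f_i(X_i,-X_{i+1})$ with each $f_i$ component-wise increasing in $(X_i,-X_{i+1})$, I observe that for any nonnegative increasing $\phi_i$, the composition $g_i := \phi_i \circ f_i$ is again a nonnegative function that is increasing in its first argument $X_i$ and decreasing in $X_{i+1}$; equivalently $g_i(x,y) := \phi_i(f_i(x,-y))$ is component-wise increasing in $(x,-y)$. Thus the functions $g_i$ satisfy exactly the hypotheses placed on $f_i$ in Proposition~\ref{prop:1} (component-wise increasing $\R^2\to[0,\infty)$ maps applied to $(X_i,-X_{i+1})$). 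Applying Proposition~\ref{prop:1} to the family $\{g_i\}$ in place of $\{f_i\}$ yields
\[
\E\left[\prod_{i=1}^K \phi_i(Y_i)\right] = \E\left[\prod_{i=1}^K g_i(X_i,-X_{i+1})\right] \le \prod_{i=1}^K \E\left[g_i(X_i,-X_{i+1})\right] = \prod_{i=1}^K \E[\phi_i(Y_i)],
\]
which by~\eqref{eq:nuod-f} establishes negative upper orthant dependence.

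For the negative lower orthant case I would apply the same idea to the reflected vector. The point is that each nonnegative \emph{decreasing} $\phi_i$ can be written so that $\psi_i(x,y) := \phi_i(f_i(x,-y))$ is component-wise \emph{decreasing} in $(x,-y)$; to reuse Proposition~\ref{prop:1} verbatim I would instead consider $\tilde f_i(x,y) := f_i(-x,y)$, which is component-wise increasing in $(-x,y)$, i.e.\ increasing in $(-X_i, X_{i+1})$, and note that setting $\tilde X_i := -X_i$ preserves independence. Applying Proposition~\ref{prop:1} to the independent variables $\tilde X_1,\dots,\tilde X_K$ with the functions $\phi_i \circ \tilde f_i$ (nonnegative, component-wise increasing) delivers the decreasing-function inequality, hence~\eqref{eq:nlod-f} and negative lower orthant dependence. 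A small bookkeeping point is that the cyclic wrap-around $X_{K+1}=X_1$ becomes $\tilde X_{K+1}=\tilde X_1$ under the reflection, so the hypothesis of Proposition~\ref{prop:1} is still met; the independent-$X_{K+1}$ case is even easier.

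The main obstacle is not any deep estimate but getting the monotonicity bookkeeping exactly right: one must be careful that ``component-wise increasing in $(X_i,-X_{i+1})$'' is precisely the hypothesis Proposition~\ref{prop:1} needs, and that composing with an increasing (respectively decreasing) $\phi_i$ keeps the composite in the required monotonicity class, possibly after the sign-flip substitution $\tilde X_i=-X_i$. Once the two orthant inequalities are in hand via~\eqref{eq:nlod-f} and~\eqref{eq:nuod-f}, negative orthant dependence follows by definition, completing the proof.
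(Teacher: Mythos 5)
Your proposal is correct and, at its core, it is the same argument as the paper's: both deduce Proposition~\ref{prop:2} from Proposition~\ref{prop:1} by composing with monotone maps so that the hypotheses of that proposition are met. For the upper-orthant half, you apply Proposition~\ref{prop:1} to $\phi_i\circ f_i$ for general nonnegative increasing $\phi_i$ and invoke \eqref{eq:nuod-f}, whereas the paper simply takes the special case of indicators $g_i(x,y)=\id_{\{f_i(x,y)>y_i\}}$, which reads off the orthant probabilities directly; these are the same argument. For the lower-orthant half your mechanism differs in a minor but interesting way: you flip signs, $\tilde X_i:=-X_i$, so that the decreasing compositions become component-wise increasing functions of $(\tilde X_i,-\tilde X_{i+1})$, while the paper instead reverses the cyclic index order, $\widetilde X_i:=X_{K-i+1}$, and uses the swapped-and-negated indicators $\id_{\{f_{K-i}(-y,-x)\le y_i\}}$. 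Both devices respect the cyclic wrap-around ($\tilde X_{K+1}=\tilde X_1$ in your case) and are valid; yours is arguably the cleaner of the two. One bookkeeping slip should be fixed: your auxiliary function must negate \emph{both} arguments, i.e., $\tilde f_i(x,y):=f_i(-x,-y)$ rather than $f_i(-x,y)$. Only with both signs flipped is $\phi_i\circ\tilde f_i$ component-wise increasing (decreasing $\phi_i$ composed with component-wise decreasing $\tilde f_i$), and only then does $\tilde f_i(\tilde X_i,-\tilde X_{i+1})=f_i(X_i,-X_{i+1})=Y_i$ hold, so that Proposition~\ref{prop:1} applied to $(\tilde X_1,\dots,\tilde X_K)$ yields exactly $\E\left[\prod_{i=1}^K\phi_i(Y_i)\right]\le\prod_{i=1}^K\E\left[\phi_i(Y_i)\right]$, as required by \eqref{eq:nlod-f}. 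With that one-line correction your proof is complete.
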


 \begin{proof}
 
Note that negative upper orthant dependence is equivalent to Equation 6 of \cite{chi2022multiple}, 
 and the analogue holds for negative lower orthant dependence by replacing increasing functions with decreasing ones. Hence, it suffices to show 
\begin{align}\label{eq:ineq}
\E\left[\prod_{i=1}^K  f_i(X_i,-X_{i+1})\right] \le\prod_{i=1}^K \E\left[ f_i(X_i,-X_{i+1})\right] ,
\end{align}
for non-negative  component-wise  increasing functions $f_1,\dots,f_K$, and for non-negative  component-wise decreasing functions $f_1,\dots,f_K$. We only show the first case, as the second is similar.
 
 There is nothing to show if $K=1$; we assume $K\ge 2$ in what follows. First, we consider the case $X_{K+1}=X_1$. 
Let $\mathbf X':=(X'_1,\dots,X_K')$ be an independent copy of $\mathbf X:=(X_1,\dots,X_K)$.  
Define a function $g:\R^{2K} \to \R$ by 
$$
g(x_1,\dots,x_K,x_1',\dots,x_K') = \prod_{i=1}^K  f_i(x_i,-x_{i+1}').
$$
We first claim that for any $(x_2,\dots,x_K,x_2',\dots,x_K')\in \R^{2K-2}$, it holds that
\begin{equation}\label{eq:pair-nd1} \E[g(X_1,x_2,\dots,x_K,X_1,x_2',\dots,x_K') ]
\le \E [g(X_1,x_2,\dots,x_K,X_1',x_2',\dots,x_K') ].
\end{equation}
To see this, it suffices to observe  
\begin{align*}
\E[ f_1(X_1,-x_2)f_2(x_K,-X_1)]\le \E[ f_1(X_1,-x_2)f_2(x_K,-X'_1)]
\end{align*}
due to the Fr\'echet-Hoeffding (or Hardy-Littlewood)  inequality (e.g., \cite[Theorem 3.13]{R13}) because $ f_1(X_1,-x_2)$ and $ f_2(x_K,-X_1) $ are counter-monotonic. 
  Therefore,  \eqref{eq:pair-nd1} holds. 
  It follows that 
  \begin{equation}\label{eq:pair-nd2} \E[g(\mathbf X,X_1,Z_2,\dots,Z_K) ]
\le \E [g(\mathbf X,X_1',Z_2,\dots,Z_K) ].
\end{equation}
holds for all random variables $Z_1,\dots,Z_K$ (here $Z_1$ does not appear).
Using the above argument on $X_2$ we get
 $$ \E[g(\mathbf X,Z_1,X_2,Z_3,\dots,Z_K) ]
\le \E [g(\mathbf X,Z_1,X_2',Z_3,\dots,Z_K) ]
$$
holds for all random variables $Z_1,\dots,Z_K$  (here $Z_2$ does not appear).
Letting $Z_1= X_1'$ we obtain
  \begin{equation}\label{eq:pair-nd4} \E[g(\mathbf X,X_1',X_2,Z_3,\dots,Z_K) ]
\le \E [g(\mathbf X,X_1',X_2',Z_3,\dots,Z_K) ].
\end{equation}
Putting \eqref{eq:pair-nd2} and \eqref{eq:pair-nd4} together we get
 $$ \E[g(\mathbf X,X_1,X_2,Z_3,\dots,Z_K) ]
\le \E [g(\mathbf X,X_1',X_2',Z_3,\dots,Z_K) ].
 $$ 
Repeating the above procedure $K$ times we get 
$$\E[g(\mathbf X,\mathbf X) ]
\le \E [g(\mathbf X,\mathbf X') ],$$
and hence 
\begin{align*} 
\E\left[\prod_{i=1}^K  f_i(X_i,-X_{i+1})\right] 
&= \E[g(\mathbf X,\mathbf X) ] 
\\& \le \E [g(\mathbf X,\mathbf X') ]  =\prod_{i=1}^K \E\left[ f_i(X_i,-X'_{i+1})\right] =\prod_{i=1}^K \E\left[ f_i(X_i,-X_{i+1})\right].
\end{align*} 
Therefore, \eqref{eq:ineq} holds.

If we take $f_K=1$, then \eqref{eq:ineq} becomes $$ 
\E\left[\prod_{i=1}^{K-1}  f_i(X_i,-X_{i+1})\right] \le\prod_{i=1}^{K-1} \E\left[ f_i(X_i,-X_{i+1})\right]  
 $$  
for all independent $X_1,\dots,X_{K}$.
Since $K$ is arbitrary, by moving from $K$ to $K+1$ we obtain that   \eqref{eq:ineq} holds
for all independent $X_1,\dots,X_{K+1}$.   
\end{proof} 
 
\section{Additional Simulation Results}\label{sec:add_sims}

{Some additional simulation results on global null testing described in Section 6.2 of \cite{chi2022multiple} with $K=10$ and $K=1,000$  are  provided in  Figure \ref{fig:enter-label}. The results are qualitatively similar to the case of $K=100$ presented in Section 6.2 of \cite{chi2022multiple}.} 

\begin{figure}[ht]
    \centering
    \caption{Global null testing (negative Gaussian dependence). All six subplots show power against $\delta$. The left endpoint of $\delta=0$ actually represents the achieved type I error, which appears to be controlled at the nominal level $\alpha=0.05$ for all methods. The key observations are in the text.}
    \includegraphics[width=0.99\textwidth]{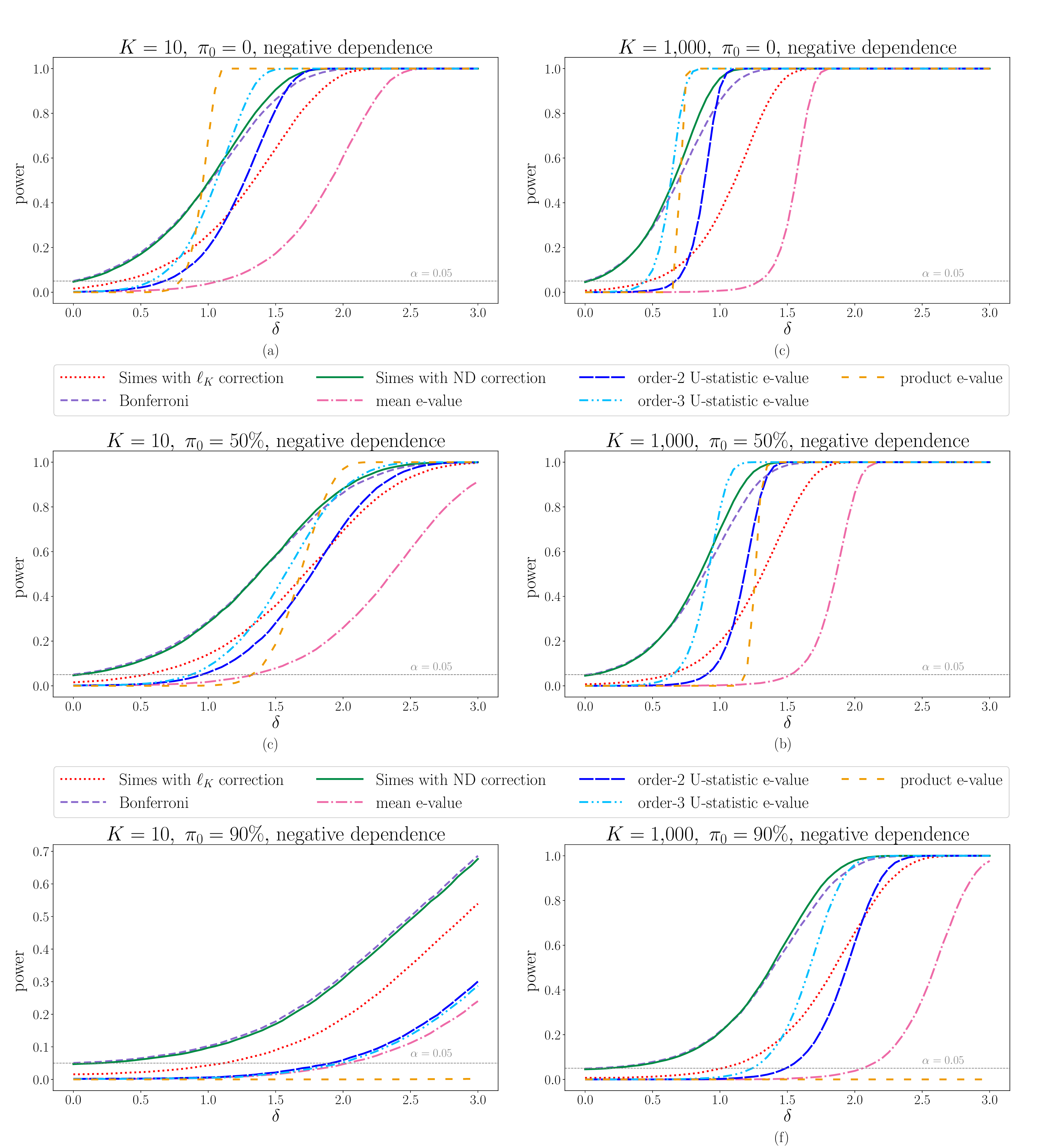}
    \label{fig:enter-label}
\end{figure}

\section{A few technical results used in the proofs}
Below, we report a few technical results used to prove Theorems 6 and 17 of \cite{chi2022multiple}. Define the  harmonic average function $M_{-1,K}:(p_1,\dots,p_K)\mapsto ((p_1^{-1}+\dots+p_K^{-1})/K)^{-1}$.

\begin{theorem}[Theorem 2 (ii) of \cite{CLTW22}]
Suppose that $(P_1,\dots,P_K)\in \mathcal U^K$, and each pair $(P_{i},P_{j})$ follows a bivariate Gaussian copula.
Then $\mathbb{P}\left(M_{-1,K}(P_{1},\dots,P_{K})<\alpha\right) / \alpha \to 1,~\text{as}~\alpha\downarrow 0.$
\end{theorem}

\begin{theorem}[Theorem  3 of \cite{CLTW22}]
 For $(p_1,\dots,p_K)\in [0,1]^K$, $M_{-1,K}(p_{1},\dots,p_{K}) \le S_K(p_1,\dots,p_K)$, and the   inequality holds as an equality if $p_{1}=\dots=p_{K}$.
\end{theorem}

\begin{theorem}[Theorem 1 of \cite{S18}] 
For  arbitrary p-values, the BH procedure $\cD_\alpha$ at level $\alpha$ satisfies   $$  
     \mathrm{FDR}_{\cD_\alpha} \le \alpha + \alpha \int_\alpha^1 \frac{\mathrm{FDR}_0(x)}{x^2} \d x,
 $$  
 where $\mathrm{FDR}_0(x)$ is the FDR of the BH procedure applied to only the null p-values at level $x$.
\end{theorem}

\end{document}